\definecolor {infocolor} {rgb} {0.6,0.6,0.6}
\newcommand {\mathset} [1] {\ensuremath {\mathbb {#1}}}
\newcommand {\R} {\mathset {R}}
\newtheorem{Definition}{Definition}
\newtheorem{Proposition}{Proposition}
\newtheorem{Theorem}{Theorem}
\newtheorem{Lemma}{Lemma}
\newtheorem{Corollary}{Corollary}
\newcommand{\bsm}{\left[\begin{matrix}}
\newcommand{\esm}{\end{matrix}\right]}
\tikzset{%
  >=latex, 
  inner sep=0pt,%
  outer sep=2pt,%
  mark coordinate/.style={inner sep=0pt,outer sep=0pt,minimum size=3pt,
    fill=black,circle}%
}
\newsavebox{\sfe@box}
{\color@endgroup\egroup\subfloat[\sfe@caption]%
{\usebox{\sfe@box}}}
\begin{document}



\title{Sparse Stable Matrices}


\author{M.-A. Belabbas\thanks{\href{mailto: belabbas@illinois.edu}{belabbas@illinois.edu}}\\
{\small Coordinated Science Laboratory}\\\small{University of Illinois at Urbana-Champaign}
}

\maketitle

\begin{abstract}
In the design of decentralized networked systems, it is useful to know whether a given network topology can sustain stable dynamics. We consider a basic version of this problem here: given  a vector space of sparse real  matrices, does it  contain a stable (Hurwitz) matrix? Said differently, is a feedback channel (corresponding to a non-zero entry)  necessary for stabilization or can it be done without. We provide in this paper a set of necessary and a set of sufficient conditions for the existence of  stable matrices in a vector space of sparse matrices. We further prove some properties of  the set of sparse  matrix spaces that contain Hurwitz matrices. The conditions we exhibit are most easily stated in the language of graph theory, which we thus adopt in this paper. 

\end{abstract}

\begin{small}
\noindent \emph{Keywords}: Stability; Decentralized Control; Network Control; Graph Theory; Hamiltonian cycles
\end{small}



\section{Introduction}
Many problems of practical and theoretical nature in control, biology and  communications  are characterized by an underlying network topology describing which interactions within a system are allowed, see e.g.~\cite{may_complex_stab_nature_72, rotko_decentr_06,  fax_information_2004, ali_jadbabaie_coordination_2003,  shreyas_sundaram_distributed_2011, a._nedic_distributed_2009,rinehart_networked_11} and references therein. Such problems include information transmission, distributed computation,  the study of metabolic networks, robot motion planning~\cite{desai2001modeling}, etc. The solution to particular problems of this nature can often be obtained from the broader study of the class of systems constrained by the given network topology. In this vein, the problem we address here is the one of stability of linear dynamics. The question we answer is: ``can  a given network topology  sustain stable linear dynamics?" We give a set of necessary and a set of sufficient conditions for the existence of stable dynamical systems constrained by a given topology. Furthermore, we establish some structural properties of the set of topologies that can sustain stable dynamics.

The  stability of linear dynamics is  well-understood  when there are no communication constraints in the system. Necessary and sufficient conditions in this case are given by the well-known  Routh-Hurwitz criterion~\cite{horn_matrix_1990}. These conditions are, however, not easily managed and their application in the networked case does not appear to yield manageable results. We adopt a different approach in this paper: we first relate the algebraic representation of a class of systems sharing the same underlying topology to a graphical representation and  exhibit continuous and discrete symmetries that leave the class of systems invariant; from there,  we define an algebraic set whose symmetries make it easier to derive conditions for stability. While the use of graphical methods in control is not new (see~\cite{lin74,egerstedtbook2010} or the early monograph~\cite{ Reinschke88}), our approach departs from earlier work in the sense that it relates graph theoretic properties to systems' stability.  We will show in this paper that Hamiltonian decompositions of graphs are a natural ingredient if one is to relate systems' stability to an underlying graph. Indeed, such decompositions appear in the statement of several of our main results below (Theorem~\ref{th:mainthnec}, Theorem~\ref{th:mainthsuff} and Theorem~\ref{th:structV}). Another important notion that is introduced in this work is the one of minimally stable graphs (defined in the next section); in fact,  one objective of the present line of work is to characterize all minimally stable graphs. We make a first step in this direction in the last section.

The paper is structured as follows. In the next section, we give a general overview of the problem, establish the notation and state the main results. In Section~\ref{sec:groupactions}, we introduce the group actions and the algebraic set just mentioned. In Sections~\ref{sec:hamildec} and~\ref{sec:setv}, we prove the main results of the paper.

\section{Preliminaries and summary of results}\label{sec:prelim}

Let $n>0$ be a positive integer. Denote by $\R^{n \times n}$ the vector space of real $n$ by $n$ matrices. We are concerned with some linear subspaces of $\R^{n \times n}$, namely the ones obtained by forcing one or several entries of the matrices to be zero. Precisely, let $\alpha$ be a set of pairs of integers between $1$ and $n$, that is $\alpha \subset \lbrace 1, \ldots, n \rbrace \times \lbrace 1,\ldots, n \rbrace$. We define $\Sigma_\alpha$ to be the vector space of matrices with entries indexed by $\alpha$ free, and other entries set to zero. We denote by $|\alpha|$ the cardinality of $\alpha$. For example, if $n=4$ and $\alpha=\lbrace (1,2),(1,3),(2,1),(2,2),(2,3),(3,2),(4,1),(4,3),(4,4) \rbrace$, then $\Sigma_\alpha$ is the subspace of matrices of the form
\begin{equation}
\label{ex:ma}A = \left[\begin{matrix}
0 & \ast & \ast & 0\\
\ast & \ast & 0 &\ast \\
0 & \ast & 0 & 0\\
\ast &0 & \ast & \ast
\end{matrix}\right]
\end{equation}
 where the $\ast$ represent arbitrary real numbers. We call the vector spaces $\Sigma_\alpha$  \emph{sparse matrix spaces} (SMS).  We call the $\ast$ entries \emph{free variables} and the other entries \emph{zero variables}.

We say that a matrix is stable if the real-part of its eigenvalues is strictly negative. We call a \emph{sparse matrix space stable} if it contains a stable matrix, and unstable otherwise. Our objective is to characterize the stable and unstable sparse matrix spaces.  To this end, it is convenient to adopt the following graphical representation for the $\Sigma_\alpha$'s:
to a vector subspace $\Sigma_\alpha \subset \R^{n \times n}$, we associate a directed graph (digraph) $G=(V,E)$ with $n$ vertices and $\dim \Sigma_\alpha = |\alpha |$ edges defined as follows: there is an edge between vertices $v_i$ and $v_j$ if   $(i,j)$ is  in $\alpha$, or, said otherwise, if $a_{ij}$ is a free variable. Hence the more sparse the matrices in $\Sigma_\alpha$, the fewer edges in  the corresponding graph.

\begin{figure}
\begin{center}

\begin{tikzpicture}[scale = .125, ->,>=stealth,shorten >=1pt,auto,node distance=2cm,
  thin,main node/.style={circle,fill=blue!20,draw}]

  \node[main node] (1) {1};
  \node[main node] (2) [below left of=1] {2};
  \node[main node] (3) [below right of=2] {3};
  \node[main node] (4) [below right of=1] {4};

  \path[every node/.style={font=\sffamily\small}]
    (1)      edge [bend right=10]  (2)
   edge   (3)

    (2) edge [bend right=10](1)
        edge (4)
        edge [loop left]  (2)
    (3) edge (2)
       
    (4) edge  (3)
    	edge (1)
     edge [loop right] (4)       ;
\end{tikzpicture}
\end{center}
\caption{The graph depicts the vector space $\Sigma_\alpha$ described in~\eqref{ex:ma}}
\label{fig:ma2}
\end{figure}
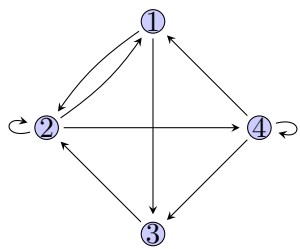

Some rather intuitive necessary conditions for stability are easily obtained and expressed in the graphical representation. The first is that  the graph has a node with a self-loop\footnote{A \emph{self-loop}, also called a \emph{buckle}, is an edge that starts and ends at the same vertex}: indeed, a stable matrix necessarily has a negative trace and since self-loops correspond to diagonal elements in the sparse matrix space, at least one free variable on the diagonal is needed. We call nodes with self-loops \emph{sinks}. We thus record the following necessary condition:
\emph{a graph is stable only if it has a node with a self-loop.}
Recall that a strongly connected component~\cite{diestel_graph_2010} is a maximal (in terms of number of nodes and edges) subgraph that is strongly connected, that is for which there exists a directed path joining any two pairs of vertices.  For example, in the graph depicted below, the subgraph on $\{1,2,3  \}$ is a strongly connected component.

\begin{center}
\begin{tikzpicture}[scale = .25, ->,>=stealth,shorten >=1pt,auto,node distance=1.5cm,
  thin,main node/.style={circle,fill=blue!20,draw}]

  \node[main node] (1) {1};
  \node[main node] (2) [below left of=1] {2};
  \node[main node] (3) [below  of=2] {3};
  \node[main node] (5) [below right  of=1] {5};
  \node[main node] (4) [below  of=5] {4};

  \path[every node/.style={font=\sffamily\small}]
    (1)      edge [bend right=10]  (2)

    (2) edge [bend right=10](3)
    (2) edge [bend right=10](4)
        (5) edge [bend left=10](4)
edge [bend right=10](1)
    (3) edge [bend right=10](1)
   edge [bend right=10](4)
        ;
\end{tikzpicture}
\end{center}

Self-loops in a graph can be understood as elements that dissipate energy. From that point of view, it seems reasonable to expect that every node is strongly connected to a node with a self-loop, or, informally, that every node in the graph can see and is seen by an element that dissipates energy. We thus extend the previously mentioned necessary condition as follows: \emph{a graph is stable only if every node belongs to a strongly connected component with a sink.}

We summarize this discussion in the following Theorem:

\begin{Theorem}
A sparse matrix space is stable only if every node in the associated graph belongs to a  strongly connected component with a sink. \label{th:strongsink}
\end{Theorem}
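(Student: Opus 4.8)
The plan is to argue by contraposition: I will show that if the associated graph $G$ has a strongly connected component containing no sink, then \emph{every} matrix $A \in \Sigma_\alpha$ fails to be stable, so that $\Sigma_\alpha$ is unstable. Since every node of $G$ lies in exactly one strongly connected component, the asserted condition is equivalent to requiring that each strongly connected component contain a sink, and this is what I will contradict.

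First I would exploit the strongly connected component decomposition of $G$. Let $S_1, \dots, S_k$ be the strongly connected components; contracting each to a single vertex yields the condensation of $G$, which is a directed acyclic graph. Ordering the components according to a topological sort of the condensation and relabelling the vertices of $V$ accordingly induces a permutation matrix $P$ such that $P A P^{\top}$ is block triangular \emph{for every} $A \in \Sigma_\alpha$. The point is that any edge $(i,j)$ joining distinct components must respect the topological order, so the off-diagonal blocks that would connect a later component back to an earlier one correspond to entries forced to zero by the sparsity pattern $\alpha$; these vanish identically, regardless of the values chosen for the free variables. The diagonal blocks $A_{S_1}, \dots, A_{S_k}$ are precisely the principal submatrices of $A$ indexed by the vertex sets of the components.

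Next I would invoke the standard fact that the spectrum of a block triangular matrix is the union, with multiplicity, of the spectra of its diagonal blocks; hence $A$ is stable if and only if each $A_{S_i}$ is stable. The heart of the argument is then the same trace condition used to derive the single-self-loop necessary condition. Suppose some component $S_i$ contains no sink. Then for every $v \in S_i$ the diagonal entry $a_{vv}$ corresponds to a self-loop absent from $G$, so $a_{vv}$ is a zero variable and $\operatorname{tr}(A_{S_i}) = \sum_{v \in S_i} a_{vv} = 0$. As the trace of a real matrix equals the sum of its eigenvalues, whose real parts then sum to zero, the eigenvalues of $A_{S_i}$ cannot all have strictly negative real part, so $A_{S_i}$ is not stable. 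By the spectral decomposition above, $A$ is not stable either. Since $A \in \Sigma_\alpha$ was arbitrary, $\Sigma_\alpha$ contains no stable matrix, which is the desired contrapositive.

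I expect the only delicate point to be the bookkeeping behind the block triangular reduction, namely verifying that the vanishing off-diagonal blocks are a consequence of the \emph{fixed} pattern $\alpha$ rather than of a particular choice of free variables, so that the reduction holds simultaneously for all $A \in \Sigma_\alpha$. Everything downstream, the union-of-spectra property and the trace estimate, is routine and mirrors the elementary negative-trace observation already recorded above.
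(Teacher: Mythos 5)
Your proof is correct and follows essentially the same route as the paper: reorder the vertices by a permutation (a similarity that preserves the sparsity class), observe that the resulting matrices are block triangular for \emph{every} choice of the free variables, and apply the zero-trace argument to the diagonal block of the sink-free component. The only real difference is bookkeeping: you use the full condensation into strongly connected components with a topological order, whereas the paper uses a coarser three-block split $V_{\text{in}}$, $V_c$, $V_{\text{out}}$ around the offending component; your version even handles vertices with no directed path to or from that component slightly more cleanly.
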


This latter condition is not as straightforward to prove as the first one (hence we give a  formal proof of it below), but in some sense encompasses a condition that is fairly intuitive given the graphical representation of a SMS. This condition, however, is  far from being sufficient, a simple counter-example being given by the graph on the left in Figure~\ref{fig:ma}. One objective of this paper is, besides introducing more refined conditions, introduce the machinery and tools that allow to go beyond Theorem~\ref{th:strongsink}.
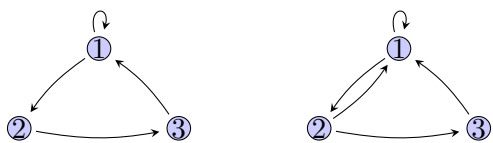
\begin{figure}
\begin{center}

\begin{tikzpicture}[scale = .25, ->,>=stealth,shorten >=1pt,auto,node distance=1.5cm,
  thin,main node/.style={circle,fill=blue!20,draw}]

  \node[main node] (1) {1};
  \node[main node] (2) [below left of=1] {2};
  \node[main node] (3) [below right of=1] {3};

  \path[every node/.style={font=\sffamily\small}]
    (1)      edge [bend right=10]  (2)
 edge [loop above] (1)   

    (2) edge [bend right=10](3)
    (3) edge [bend right=10](1)
   
        ;
\end{tikzpicture}\qquad\qquad
\begin{tikzpicture}[scale = .25, ->,>=stealth,shorten >=1pt,auto,node distance=1.5cm,
  thin,main node/.style={circle,fill=blue!20,draw}]

  \node[main node] (1) {1};
  \node[main node] (2) [below left of=1] {2};
  \node[main node] (3) [below right of=1] {3};

  \path[every node/.style={font=\sffamily\small}]
    (1)      edge [bend right=10]  (2)
 edge [loop above] (1)   

    (2) edge [bend right=10](3)
    (2) edge [bend right=10](1)
    
    (3) edge [bend right=10](1)
   
        ;
\end{tikzpicture}
\end{center}
\caption{ Even though both graphs are strongly connected, the SMS with corresponding graph depicted on the left is not stable, whereas the one with corresponding graph depicted on the right is. We provide in this paper (see Theorem~\ref{th:mainthnec} and~\ref{th:mainthsuff}) conditions that are strong enough to explain why the behavior of systems defined of these two graphs differ.}
\label{fig:ma}
\end{figure}


In addition to necessary and sufficient conditions for stability, we  investigate structural properties of the \emph{set of sparse matrix spaces} with an eye towards providing a classification of stable and unstable topologies. To understand the nature of these results, it is helpful to keep in mind the rule of thumb stating that adding edges helps stability and removing edges helps instability. This leads to the definition of \emph{minimally stable graphs}: graphs that are stable and such that removing any edge yields an unstable graph. To be more formal,   observe that sparse matrix spaces can naturally be partially ordered according to subspace inclusion and that if a sparse matrix space $\Sigma_\alpha$ is unstable, every subspace of $\Sigma_\alpha$ unstable. Reciprocally, if $\Sigma_\beta$ is stable, every subspace that contains $\Sigma_\beta$ is stable. In terms of the graphical representation, graphs partially ordered as just described are easily seen to correspond to graphs with added or removed edges.   The above facts allow us to talk about \emph{minimally stable sparse matrix spaces}, that is stable sparse matrix spaces  such that every sparse matrix space obtained from it by adding a zero (viz. removing an edge)  is unstable, and \emph{maximally unstable sparse matrix spaces},  that is unstable sparse matrix spaces such that every sparse matrix space obtained from it by removing a zero (viz. adding an edge) is stable. 

With this in mind, we prove the following facts about maximally unstable sparse matrix spaces: the minimal number of zero variables in an unstable SMS is $n$, that is every SMS with less than $n$ zero is stable. In fact, we will show that there is a unique maximally unstable sparse matrix space, we call it $\Sigma_{\alpha_0}$, with $n$ zeros: this is the space of matrices with zero diagonal entries. Next, we consider matrix spaces with one non-zero diagonal entry, which are obviously not included in $\Sigma_{\alpha_0}$. We show that these spaces have at least $n-1$ off-diagonal zeros; said otherwise, if a matrix space has at least one non-zero diagonal entry and at most $n-2$ off-diagonal zeros, then it is stable.

\subsection{Graphs and Hamiltonian decompositions}

We establish here the vocabulary used to deal with  graphs. We say that $G'=(V',E')$ is a subgraph of $G$, denoted by $G' \subset G$, if $G'$ is a graph and $V' \subset V$, $E' \subset E$. A class of subgraphs frequently used  consists of subgraphs induced by a vertex set: we call $G'=(V',E')$ the subgraph of $G$ induced by the vertex set $V'$ if $E'$ contains all edges of $G$ that both start and end at vertices of $V'$, that is $$E'=\lbrace (v_i,v_j) \mid v_i,v_j \in V' \mbox{ and } (v_i,v_j) \in E \rbrace.$$

We say that two subgraphs $G_1, G_2 \subset G$ are \emph{disjoint} if their vertex sets (and hence edge sets) are disjoint. We say that $G_1,\ldots, G_k\subset G$ is a \emph{decomposition of $G$} if the $G_i$'s are pairwise disjoint and  the union of the vertex sets of the $G_i$ is the vertex set of $G$. For the example of the graph depicted above, the subgraph induced by the vertex set $\{1,2,3\}$ contains the edges $\{(1,2),(2,3),(3,1)\}$. This subgraph is disjoint from the subgraph induced by the vertex set $\{4,5\}$, and they together form a decomposition of $G$.

Recall that a path of length $k$ in a graph is a sequence of vertices $v_1, \ldots,  v_k$ connected by edges, i.e. such that $(v_i,v_{i+1}) \in E$ for $i=1\ldots $k$-1.$ A \emph{cycle} is path with $v_1 = v_k$. A \emph{Hamiltonian cycle} is a cycle that visits every vertex exactly once, except for the origin vertex, which is visited twice~\cite{diestel_graph_2010}. Clearly, not all graphs admit Hamiltonian cycles.  We call a \emph{Hamiltonian decomposition} of the graph $G$ a decomposition of $G$ into disjoint subgraphs $G_1, \ldots, G_m$ where each $G_i$ admits a Hamiltonian cycle.  An important object in this work is the following:

\begin{Definition}Let $G=(V,E)$. A \emph{Hamiltonian subgraph} is a subgraph of $G$ which  admits a Hamiltonian decomposition. A Hamiltonian $k$-subgraph is a Hamiltonian subgraph of cardinality $k$.
\end{Definition}

We illustrate these definitions in Figure~\ref{fig:illhamdec}. 

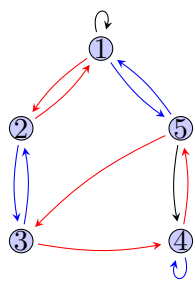
\begin{figure}
\begin{center}
\begin{tikzpicture}[scale = .25, ->,>=stealth,shorten >=1pt,auto,node distance=1.5cm,
  thin,main node/.style={circle,fill=blue!20,draw}]

  \node[main node] (1) {1};
  \node[main node] (2) [below left of=1] {2};
  \node[main node] (3) [below  of=2] {3};
  \node[main node] (5) [below right of=1] {5};
    \node[main node] (4) [below  of=5] {4};
  \path[every node/.style={font=\sffamily\small}]
    (1)      edge [bend right=10, red]  (2)
      edge [bend right=10, blue]  (5)
 edge [loop above] (1)   

    (2) edge [bend right=10,blue](3)
    (2) edge [bend right=10, red](1)
    
    (3)     edge [bend right=10,red](4)
 
    edge [bend right=10,blue](2)
(4) edge [bend right=10,red](5)
edge [loop below, right ,blue] (4)  

 (5)   edge [bend right=10,red](3)     
     edge [bend right=10,blue](1)     
     edge [bend right=10](4)  
        ;
\end{tikzpicture}
\end{center}
\caption{ The graph depicted above admits several Hamiltonian decompositions: one into the cycles $(12)$ and $(345)$, one into the cycles $(15), (23), (4)$ and one into the cycle $(12345)$. The subgraph induced by the vertices $\{1,2,3,4\}$ is a 4-Hamiltonian subgraph of $G$. }\label{fig:illhamdec}
\end{figure}

\subsection{Necessary and sufficient conditions}
We are now in a position to state further conditions to decide the stability of sparse matrix spaces:

\begin{Theorem}[Necessary condition for stability]
A sparse matrix space is stable only if  its associated graph contains, for each $k=1,\ldots,n$, a Hamiltonian $k$-subgraph. 
\label{th:mainthnec}
\end{Theorem}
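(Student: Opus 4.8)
The plan is to reduce the graph-theoretic statement to a statement about the coefficients of the characteristic polynomial and then to read those coefficients off combinatorially. Recall that for any $A \in \R^{n\times n}$ the characteristic polynomial expands as $\det(\lambda I - A) = \sum_{k=0}^{n} (-1)^k E_k(A)\,\lambda^{n-k}$, where $E_k(A)$ denotes the sum of all $k\times k$ principal minors of $A$ (so $E_0 = 1$, $E_1(A) = \operatorname{tr} A$ and $E_n(A) = \det A$). The first ingredient I would invoke is the elementary (necessary) half of the Routh--Hurwitz criterion: if $A$ is Hurwitz then its characteristic polynomial, being a product of factors $(\lambda + r)$ with $r>0$ and $(\lambda^2 + b\lambda + c)$ with $b,c>0$, has strictly positive coefficients. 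In particular every coefficient is nonzero, so for a stable $A$ we have $E_k(A) \neq 0$ for each $k = 1, \ldots, n$.

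Now suppose $\Sigma_\alpha$ is stable and fix a Hurwitz matrix $A \in \Sigma_\alpha$. By the above, $E_k(A) \neq 0$ for every $k$; hence, viewing $E_k$ as a polynomial in the free variables $\{a_{ij} : (i,j)\in\alpha\}$, this polynomial does not vanish identically. It therefore suffices to prove the purely combinatorial equivalence: $E_k$ is not the zero polynomial if and only if $G$ contains a Hamiltonian $k$-subgraph.

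The key step is to expand $E_k$ by the Leibniz formula. For each $k$-subset $S \subseteq \{1,\ldots,n\}$ the principal minor is $\det A[S] = \sum_{\sigma} \operatorname{sgn}(\sigma)\prod_{i\in S} a_{i\sigma(i)}$, the sum ranging over permutations $\sigma$ of $S$, so $E_k = \sum_{|S|=k}\det A[S]$ is a signed sum of monomials $\prod_{i\in S} a_{i\sigma(i)}$. Such a monomial is not forced to zero in $\Sigma_\alpha$ exactly when $(i,\sigma(i))\in\alpha$ for every $i \in S$, i.e.\ when each of its factors corresponds to an edge of $G$. Decomposing $\sigma$ into its disjoint cycles then exhibits the vertex set $S$ as a disjoint union of directed cycles of $G$ (a fixed point of $\sigma$ being a self-loop), which is precisely a Hamiltonian decomposition of a subgraph on $S$; conversely any Hamiltonian $k$-subgraph yields such a permutation. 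Thus a surviving monomial exists precisely when $G$ has a Hamiltonian $k$-subgraph.

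The one point requiring care --- and the step I expect to be the main obstacle --- is to rule out cancellation: a single surviving monomial guarantees $E_k \not\equiv 0$ only if no two admissible terms coincide. This holds because the monomial $\prod_{i\in S} a_{i\sigma(i)}$ records the row set $S$ and, for each row $i\in S$, the column $\sigma(i)$, so it determines both $S$ and $\sigma$ uniquely. Consequently distinct pairs $(S,\sigma)$ give distinct monomials, no two terms can cancel, and $E_k\not\equiv 0$ is equivalent to the existence of at least one admissible $(S,\sigma)$, that is, a Hamiltonian $k$-subgraph. Applying this for each $k=1,\ldots,n$ completes the argument; as a sanity check, $k=1$ recovers the sink condition ($E_1 = \operatorname{tr} A \neq 0$) and $k=n$ gives $\det A\neq 0$.
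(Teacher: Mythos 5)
Your proof is correct and follows essentially the same route as the paper: both express the characteristic-polynomial coefficients as sums of principal minors, use that a Hurwitz matrix has a characteristic polynomial with no vanishing coefficients, and identify the Leibniz monomials built from free variables with Hamiltonian decompositions via the cycle decomposition of the corresponding permutation. The only difference is that your no-cancellation argument is not actually needed for the implication the theorem asserts --- the existence of a stable $A \in \Sigma_\alpha$ with $E_k(A) \neq 0$ already forces at least one surviving monomial, which is all one needs; cancellation would only matter for the converse direction, which you prove as a bonus.
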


\begin{Theorem}[Sufficient condition for stability]A sparse matrix space is stable if its associated graph $G$    contains a sequence of nested Hamiltonian subgraphs $G_1 \subset G_2 \subset  \ldots\subset G_{n-1}\subset G$.
\label{th:mainthsuff}
\end{Theorem}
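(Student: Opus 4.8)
The plan is to induct on the number of vertices $n$ and build a Hurwitz matrix one vertex at a time, using the nested chain to peel off the top vertex. The base case $n=1$ is immediate: $G_1$ is a single vertex carrying a self-loop, so the diagonal entry is free and may be set to $-1$. For the inductive step, let $S_{n-1}$ be the vertex set of $G_{n-1}$ and relabel so that $S_{n-1}=\{1,\dots,n-1\}$ and the remaining vertex is $n$. The sub-chain $G_1\subset\cdots\subset G_{n-1}$ is a chain of nested Hamiltonian subgraphs whose top element $G_{n-1}$ is Hamiltonian on all of $S_{n-1}$; it therefore witnesses the hypothesis for the sparse matrix space induced on $S_{n-1}$, and the inductive hypothesis supplies a Hurwitz matrix $B$ in that space. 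Writing a general element of $\Sigma_\alpha$ in block form
\[
A=\begin{bmatrix} B & c\\ r & d\end{bmatrix},
\]
with $c$ (resp.\ $r$) a column (resp.\ row) supported on the free in/out edges at vertex $n$ and $d=a_{nn}$, I would compute the characteristic polynomial by a Schur complement,
\[
\det(sI-A)=(s-d)\,p_B(s)-q(s),\qquad q(s):=r\,\operatorname{adj}(sI-B)\,c,
\]
where $p_B$ is the (Hurwitz) characteristic polynomial of $B$ and $\deg q\le n-2$.

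Next I split into two cases according to whether vertex $n$ carries a self-loop. If it does, then $d=a_{nn}$ is free; setting $c=r=0$ and $d=-1$ makes $A$ block-triangular with spectrum $\{-1\}\cup\mathrm{spec}(B)$, which is Hurwitz, and we are done. The substantive case is when $n$ has no self-loop, so $d=0$ is forced and $\det(sI-A)=s\,p_B(s)-q(s)$. At $c=r=0$ this polynomial has the simple root $0$ together with the roots of $p_B$, all strictly in the left half-plane. I would then treat $c,r$ as a small perturbation: by continuity of the roots in the coefficients, the $n-1$ roots near $\mathrm{spec}(B)$ stay in the open left half-plane, while the implicit function theorem shows the root branch emanating from $0$ moves, to first order, to $q(0)/p_B(0)$. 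Since $p_B(0)>0$ for a Hurwitz $p_B$, it suffices to arrange $q(0)=r\,\operatorname{adj}(-B)\,c$ to be a small negative number; scaling $(c,r)\mapsto(t c,t r)$ keeps the perturbation arbitrarily small while the sign of $q(0)$ is fixed by the signs of $c,r$.

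The crux is therefore to exhibit a Hurwitz $B$ and admissible sparse vectors $c,r$ with $q(0)\neq 0$. Here the top of the chain enters: reading the hypothesis as supplying a Hamiltonian $n$-subgraph $G_n\subseteq G$ (necessary in any case, since otherwise $\det A\equiv 0$ and $0$ is always an eigenvalue), the determinant $\det A$ is not identically zero on $\Sigma_\alpha$, because a Hamiltonian decomposition on all $n$ vertices is exactly a permutation $\sigma$ with $a_{i\sigma(i)}$ free for every $i$, i.e.\ a nonvanishing monomial of $\det A$. Cofactor expansion along the last row and column gives $\det A=d\det B-r\,\operatorname{adj}(B)\,c$, and with $d=0$ this shows $r\,\operatorname{adj}(B)\,c$ is a nonzero polynomial in the free variables of $B,c,r$. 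Since the Hurwitz matrices form a nonempty open subset of the space of admissible $B$, and a nonzero polynomial cannot vanish identically on the product of this open set with the $c,r$ coordinates, there is a choice of Hurwitz $B$ and sparse $c,r$ with $r\,\operatorname{adj}(B)\,c\neq 0$, hence $q(0)\neq 0$; flipping the sign of $r$ if needed and scaling down yields the desired Hurwitz $A$, completing the induction. The main obstacle to watch is precisely this simultaneity — keeping $B$ in the open Hurwitz region while forcing the off-diagonal coupling $q(0)$ to be nonzero and of the correct sign — together with verifying that it is the single root near the origin, and no other, that is being steered across the imaginary axis.
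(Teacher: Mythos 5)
Your proof is correct, and it takes a genuinely different route from the paper's. The paper's argument is not inductive: it converts the nested chain into a vertex ordering $\sigma$, uses Lemma~\ref{lem:hamiltdeczerodet} to show that each reordered leading principal minor vanishes only on a codimension-one subset of $\Sigma_\alpha$, concludes that a generic $A\in\Sigma_\alpha$ has all of them nonzero at once (so $\Sigma_\alpha\nsubseteq\mathcal{V}$), and then converts nonvanishing minors into an actual Hurwitz matrix by citing the diagonal-stabilizability result of Yu, Anderson, Dasgupta and Fidan (Lemma~\ref{th:diagstabander} via Corollary~\ref{cor:diagstab}), the point being that the stabilizing product $DA$ stays inside $\Sigma_\alpha$. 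You instead induct on $n$: you border a Hurwitz matrix $B$ living on $G_{n-1}$ by the row and column of the remaining vertex, and use continuity of roots plus the implicit function theorem to steer the single eigenvalue born at the origin into the open left half-plane, its sign controlled by $q(0)=r\operatorname{adj}(-B)c$; genericity enters only through the fact that the Hurwitz matrices form a nonempty open subset of the induced SMS on which the nonzero polynomial $r\operatorname{adj}(B)c$ (nonzero because $\det A\not\equiv 0$) cannot vanish identically. In effect you have in-lined, in the sparse setting, the Fisher--Fuller-style bordering-plus-perturbation induction that the paper outsources to~\cite{yu_control_2009}; this makes your proof self-contained and constructive (no diagonal-stabilizability input, no need for the set $\mathcal{V}$), at the price of redoing the perturbation analysis, while the paper's route is shorter given its machinery and keeps the theorem inside the framework of $\mathcal{V}$ in~\eqref{eq:defV}, which is reused for Theorem~\ref{th:structV}. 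One further point in your favor: you state explicitly that the chain must end with $G$ itself admitting a spanning Hamiltonian decomposition (otherwise $\det A\equiv 0$ and no matrix in the space can be Hurwitz), and your case split at the top vertex invokes this only where it is needed; in the paper this top-level requirement is implicit, since $p_\sigma$ in~\eqref{eq:defps} stops at the $(n-1)$st minor even though Lemma~\ref{th:diagstabander} also requires $\det A\neq 0$, which is supplied by the same codimension argument applied to $G$ itself.
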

Notice that the strict inclusions imply that the vertex set $V_k$ of $G_k$ has cardinality $k$.  From Theorem~\ref{th:mainthnec}, we conclude that  the graph on the left in Figure~\ref{fig:ma} is unstable, even though every node is strongly connected to a sink. From Theorem~\ref{th:mainthsuff}, we conclude that the graph on the right in Figure~\ref{fig:ma} is stable, a sequence of nested Hamiltonian subgraphs being given by the subgraphs induced by $\{1\}$ and $\{1,2\}$. We give further examples: consider the sparse matrix spaces

 { $$\Sigma_\alpha = \left[ \begin{array}{ccccc}
\ast & \ast & 0 & 0 & 0 \\
\ast & 0 & \ast & 0 & 0\\
\ast & 0 & 0& \ast & 0 \\
0 & 0 & \ast& 0& \ast \\
\ast & 0 & 0 & 0 & 0
\end{array}\right] 
\mbox{ and }
\Sigma_\beta = \left[ \begin{array}{ccccc}
\ast & \ast & 0 & 0 & \ast \\
0 & 0 & \ast & 0 & 0\\
\ast & 0 & 0& \ast & 0 \\
0 & 0 & 0& 0& \ast \\
\ast & 0 & 0 & \ast & 0
\end{array}\right]. $$} 

 The SMS $\Sigma_\alpha$ is stable since its associated graph, depicted in the Figure below left,  satisfies the conditions of Theorem~\ref{th:mainthsuff} with $G_k = \{ v_1,\ldots,v_k \}$. The Hamiltonian decompositions are respectively given by  $\left\lbrace \{v_1\}\right\rbrace $, $\left\lbrace \{v_1,v_2\}\right\rbrace $,  $\left\lbrace \{v_1,v_2, v_3 \}\right\rbrace$,$\left\lbrace \{v_1,v_2\}, \{v_3,v_4\} \right\rbrace$ and $\left\lbrace \{v_1,v_2,v_3,v_4,v_5 \}\right\rbrace $, each of these subgraphs having an obvious Hamiltonian cycle. On the contrary, the SMS $\Sigma_\beta$,  is not stable since the corresponding graph, depicted below right, contains  no Hamiltonian 4-subgraph. 

\begin{center}
\begin{tikzpicture}[scale = .25, ->,>=stealth,shorten >=1pt,auto,node distance=1.5cm,
  thin,main node/.style={circle,fill=blue!20,draw}]

  \node[main node] (1) {1};
  \node[main node] (2) [below left of=1] {2};
  \node[main node] (3) [below  of=2] {3};
  \node[main node] (5) [below right of=1] {5};
    \node[main node] (4) [below  of=5] {4};
    
    \node (name) [below right of=3]{$\Sigma_\alpha$};
  \path[every node/.style={font=\sffamily\small}]

    (1)      edge [bend right=10]  (2)
    
 edge [loop above] (1)   

    (2) edge [bend right=10](3)
   edge [bend right=10](1)
    
    (3) edge [bend right=10](1)
   		edge [bend right=10](4)
   		 (4) edge [bend right=10](5)
   		 edge [bend right=10](3)
   	(5)	edge [bend right=10](1)
        ;
        
\end{tikzpicture}\qquad\qquad
\begin{tikzpicture}[scale = .25, ->,>=stealth,shorten >=1pt,auto,node distance=1.5cm,
  thin,main node/.style={circle,fill=blue!20,draw}]

  \node[main node] (1) {1};
  \node[main node] (2) [below left of=1] {2};
  \node[main node] (3) [below  of=2] {3};
  \node[main node] (5) [below right of=1] {5};
    \node[main node] (4) [below  of=5] {4};
        \node (name) [below right of=3]{$\Sigma_\beta$};

  \path[every node/.style={font=\sffamily\small}]
    (1)      edge [bend right=10]  (2)
 edge [loop above] (1)   
edge [bend right=10]  (5)
    (2) edge [bend right=10](3)
    (5) edge [bend right=10](4)
    
    (3) edge [bend right=10](1)
   		edge [bend right=10](4)
   		 (4) edge [bend right=10](5)
   	(5)	edge [bend right=10](1)
        ;
\end{tikzpicture}
\end{center}

\section{The subspace of unstable SMS and its symmetries}\label{sec:groupactions}

We introduce in this section a subset of $\R^{n\times n}$  that contains all the unstable sparse matrix spaces. This subspace, which is  the common zero set of polynomial equations and is thus an algebraic set, is used extensively in the proof of the main results below. This subspace has a large number of symmetries, which we also study here.
We now exhibit three straightforward operations  that map stable SMS to stable SMS, a continuous operation and two discrete operations.

We start with the simpler discrete operation, which corresponds to matrix transposition. If $\alpha=\lbrace (a,b), (c,d), \ldots \rbrace$ is a sparse matrix space, we define 
\begin{equation}
\label{eq:deftranszp}
\alpha' = \lbrace (b,a), (d,c), \ldots \rbrace.
\end{equation}
In terms of matrix representation, if $A \in \Sigma_\alpha$, then $A' \in \Sigma_{\alpha'}$. In terms of graphical representation, it corresponds to changing the direction of all edges of the graph. That is,  $(v_i,v_j)$ is an edge of the graph associated to $\alpha$ if and only if $(v_j,v_i)$ is an edge of the graph associated to $\alpha'$. We record here the following obvious result:

\begin{Lemma}
The sparse matrix space $\alpha$ is stable  if and only if $\alpha'$ is stable.
\end{Lemma}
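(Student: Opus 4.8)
The plan is to show that the transpose operation preserves stability by appealing to the spectral identity between a matrix and its transpose. The central fact is that for any real (indeed complex) square matrix $A$, the matrices $A$ and $A'$ (the transpose) have exactly the same characteristic polynomial, hence the same multiset of eigenvalues. This is because $\det(\lambda I - A') = \det((\lambda I - A)') = \det(\lambda I - A)$, using that a matrix and its transpose have equal determinant. Since stability is defined purely in terms of the real parts of eigenvalues, $A$ is stable if and only if $A'$ is stable.

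First I would recall from the construction preceding the statement that the map $A \mapsto A'$ sends $\Sigma_\alpha$ into $\Sigma_{\alpha'}$, and that this map is a linear bijection between the two spaces (its inverse being transposition again, since $\alpha'' = \alpha$). Next I would invoke the eigenvalue identity above to conclude that $A$ and $A'$ share the same spectrum for every matrix $A$. Combining these two observations: if $\Sigma_\alpha$ contains a stable matrix $A$, then $A' \in \Sigma_{\alpha'}$ is stable, so $\Sigma_{\alpha'}$ is stable; conversely, if $\Sigma_{\alpha'}$ contains a stable matrix $B$, then $B' \in \Sigma_\alpha$ is stable (using $\alpha'' = \alpha$), so $\Sigma_\alpha$ is stable. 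This gives the ``if and only if.''

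Honestly, there is no genuine obstacle here — this is why the authors flag it as an ``obvious result.'' The only points requiring any care are bookkeeping: confirming that transposition maps $\Sigma_\alpha$ onto $\Sigma_{\alpha'}$ (immediate from the definition in~\eqref{eq:deftranszp}, since the $(i,j)$ entry of $A$ becomes the $(j,i)$ entry of $A'$), and confirming the involution property $\alpha'' = \alpha$ so that the argument is symmetric in $\alpha$ and $\alpha'$. Both are routine consequences of the setup. Thus the proof reduces to one line: the characteristic polynomials of $A$ and $A'$ coincide, so their spectra coincide, so stability is preserved under the transpose correspondence between $\Sigma_\alpha$ and $\Sigma_{\alpha'}$.
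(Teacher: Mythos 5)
Your proof is correct and is precisely the argument the paper leaves implicit when it labels this an ``obvious result'' (the paper supplies no written proof): transposition is a linear bijection from $\Sigma_\alpha$ onto $\Sigma_{\alpha'}$, and $A$ and $A'$ share the same characteristic polynomial, hence the same spectrum, so stability transfers in both directions. Nothing is missing; your bookkeeping about $\alpha''=\alpha$ is exactly the right level of care for this statement.
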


\subsection{Conjugation by permutation matrices}

Recall that the symmetric group $S_n$ acts on an ordered set of $n$ symbols by permuting its elements. The elements of $S_n$ are called permutations. Let $\sigma \in S_n$, we use the following standard notation to represent permutations:  we write $\sigma=(\sigma_1, \sigma_1, \ldots, \sigma_n)$, where $\sigma_i \in \lbrace 1,\ldots, n \rbrace$ and $\sigma_i \neq\sigma_j$ for all pairs $i \neq j$, to represent the permutation whose action on $\{1,2,\ldots,n\}$ moves the element in position $\sigma_i$ to position $i$. For example $\sigma=(1,3,4,2)$ acting on $(a,b,c,d)$ yields $(a,c,d,b)$.  We denote the composition of $\sigma, \tau \in S_n$ by $\sigma \tau$,  by $e$ the identity permutation and by $\sigma^{-1}$ the unique inverse permutation of $\sigma$.

We also use the representation of the symmetric group as $n$ by $n$ matrices: to each $\sigma \in S_n$, we associate the $n$ by $n$ matrix  $P_\sigma$ obtained  by permuting the columns of the identity matrix according to $\sigma$. For example
\begin{equation*}
\sigma=(1,3,4,2) \longrightarrow P_\sigma=\left[\begin{matrix} 1 & 0 & 0 &0\\ 0 & 0&0 & 1\\ 0 &1& 0 & 0\\0 & 0 & 1 & 0\end{matrix}\right].
\end{equation*}

We let the symmetric group act on a sparse matrix space $\alpha=\lbrace(a,b),(c,d), \ldots \rbrace$ via $$\sigma(\alpha)=\lbrace(\sigma_{a},\sigma_{b}),(\sigma_{c},\sigma_{d}), \ldots\rbrace
$$
In terms of matrix representation, this yields $$\Sigma_{\sigma(\alpha)} = P_\sigma \Sigma_{\alpha}P_\sigma^{-1}\triangleq \{ P_\sigma A P_\sigma^{-1}\mbox{ s.t. } A \in \Sigma_\alpha\}.$$ 
For example, for the $\Sigma_\alpha$ of~\eqref{ex:ma}, we have that $\Sigma_{\sigma(\alpha)}$ contains matrices of the form

\begin{equation}
\label{ex:msa}A = \left[\begin{matrix}
0 & 0&\ast & \ast \\
\ast & \ast & 0 &\ast \\
\ast & \ast & \ast & 0\\
0 &0 & \ast & 0
\end{matrix}\right]
\end{equation}
We say that $\sigma$ is \emph{even} (resp. \emph{odd}) if $\det(P_\sigma)=1$ (resp. $\det(P_\sigma)=-1$). We write $(-1)^\sigma$ for $\det(P_\sigma)$. Finally, in terms of the graphical representation, this action of $S_n$ corresponds to relabelling  the vertices and the edges following $$v_i \longrightarrow v_{\sigma_i} \mbox{ and }(v_i,v_j) \longrightarrow(v_{\sigma_i},v_{\sigma_j}).$$
We have the following result 

\begin{Lemma}\label{lem:stabsig}
A sparse matrix space $\Sigma_\alpha$ is stable (resp. unstable) if and only if $\Sigma_{\sigma(\alpha)}$ is stable (resp. unstable) for all $\sigma \in S_n$
\end{Lemma}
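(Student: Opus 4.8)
The plan is to reduce the statement to the elementary fact that similar matrices have the same spectrum, exploiting the identity
\begin{equation*}
\Sigma_{\sigma(\alpha)} = P_\sigma \Sigma_\alpha P_\sigma^{-1}
\end{equation*}
recorded just above the lemma. Since conjugation by the invertible matrix $P_\sigma$ is a linear isomorphism of $\R^{n\times n}$, the map $\Phi_\sigma \colon A \mapsto P_\sigma A P_\sigma^{-1}$ restricts to a bijection from $\Sigma_\alpha$ onto $\Sigma_{\sigma(\alpha)}$, with inverse $B \mapsto P_\sigma^{-1} B P_\sigma = P_{\sigma^{-1}} B P_{\sigma^{-1}}^{-1}$. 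The crux is then that $\Phi_\sigma$ preserves eigenvalues.

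First I would note that for any $A$, the matrices $A$ and $P_\sigma A P_\sigma^{-1}$ have identical characteristic polynomials, since $\det(P_\sigma A P_\sigma^{-1} - \lambda I) = \det\bigl(P_\sigma (A-\lambda I) P_\sigma^{-1}\bigr) = \det(A - \lambda I)$, using multiplicativity of the determinant and $\det(P_\sigma)\det(P_\sigma^{-1}) = 1$. Hence $A$ and $\Phi_\sigma(A)$ have the same set of eigenvalues with the same multiplicities, and in particular one has all eigenvalues in the open left half-plane if and only if the other does. Thus $A \in \Sigma_\alpha$ is stable if and only if $\Phi_\sigma(A) \in \Sigma_{\sigma(\alpha)}$ is stable.

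To conclude, I would combine this with the bijectivity of $\Phi_\sigma$: if $\Sigma_\alpha$ is stable it contains some stable $A$, whose image $\Phi_\sigma(A)$ is a stable element of $\Sigma_{\sigma(\alpha)}$, so $\Sigma_{\sigma(\alpha)}$ is stable; the converse follows identically by applying the same argument to $\sigma^{-1}$, which sends $\Sigma_{\sigma(\alpha)}$ back to $\Sigma_\alpha$ since $\sigma^{-1}(\sigma(\alpha)) = \alpha$. This gives the equivalence for every fixed $\sigma \in S_n$, hence for all $\sigma \in S_n$. The \emph{unstable} case is immediate, being the logical negation of stability. There is in fact no substantive obstacle here: the only point requiring a word of care is that $\Phi_\sigma$ genuinely maps $\Sigma_\alpha$ \emph{onto} $\Sigma_{\sigma(\alpha)}$ rather than merely into it, but this is exactly the displayed identity already established, so the lemma follows directly.
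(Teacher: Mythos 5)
Your proposal is correct and follows essentially the same route as the paper's own proof: both rest on the identity $\Sigma_{\sigma(\alpha)} = P_\sigma \Sigma_\alpha P_\sigma^{-1}$ together with the fact that conjugation by $P_\sigma$ preserves eigenvalues, transporting a stable matrix from one space to the other and back. You merely spell out details the paper leaves implicit (the characteristic-polynomial computation and the surjectivity of the conjugation map), which is fine but not a different argument.
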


\begin{proof}
Assume $\Sigma_\alpha$ is stable and denote by $A$ a stable matrix in $\Sigma_\alpha$, then $P_\sigma A P_{\sigma}^{-1}$ is also stable and belongs to $\Sigma_{\sigma(\alpha)}$. Reciprocally, assume there exists a $\sigma$ such that $\Sigma_{\sigma(\alpha)}$ is stable and let $A$ be a stable matrix in $\Sigma_{\sigma(\alpha)}$, then $P_{\sigma}^{-1} A P_{\sigma}$ belongs to $\Sigma_\alpha$ and is stable.
\end{proof}

\subsection{Multiplication by non-singular diagonal matrices}

We observe that  multiplication to the left by a non-singular diagonal matrix maps a sparse matrix space to itself; that is \begin{equation}\label{eq:multdiag}D \Sigma_\alpha = \Sigma_\alpha,\end{equation} 
and similarly for multiplication to the right: $\Sigma_\alpha D = \Sigma_\alpha$.   

We say that a matrix $A$ is left- (resp. right- or both left-right-) \emph{diagonally stabilizable} if there exists a diagonal matrix $D$ such that $DA$ (resp. $AD$ or $D_1AD_2$)  is stable. We record the following fact about diagonally stabilizable matrices, which says that the sets of left-, right-, and left-right- diagonally stabilizable matrices are the same. 

\begin{Lemma}
The following statements are equivalent:
\begin{enumerate}
\item $A$ is left-diagonally stabilizable.
\item $A$ is right-diagonally stabilizable.
\item $A$ is left-right-diagonally stabilizable.
\end{enumerate}
\end{Lemma}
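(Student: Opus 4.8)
The plan is to build everything on a single elementary fact: for any two square matrices $X$ and $Y$ of the same size, the products $XY$ and $YX$ have the same spectrum, so $XY$ is Hurwitz if and only if $YX$ is. When one of the two factors is invertible this is an honest similarity, since $YX = Y(XY)Y^{-1}$, and similarity preserves eigenvalues. Because a product of nonsingular diagonal matrices is again a nonsingular diagonal matrix, this observation is exactly what is needed to move diagonal factors from one side of $A$ to the other without affecting stability.

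First I would establish the equivalence of statements (1) and (2). If $A$ is left-diagonally stabilizable, pick a nonsingular diagonal $D$ with $DA$ stable; then $AD = D^{-1}(DA)D$ is similar to $DA$, hence stable, so $A$ is right-diagonally stabilizable with the \emph{same} $D$. The converse is identical with the roles of the two factors exchanged, giving (1) $\Leftrightarrow$ (2). The implications (1) $\Rightarrow$ (3) and (2) $\Rightarrow$ (3) are then immediate, since the identity is a nonsingular diagonal matrix: a one-sided stabilization is turned into a two-sided one by setting the remaining factor to $I$.

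The only genuinely new content is the reverse implication (3) $\Rightarrow$ (1), where two independent diagonal matrices must be absorbed into one. Suppose $D_1 A D_2$ is stable for nonsingular diagonal $D_1, D_2$. Writing $D_1 A D_2 = (D_1 A) D_2$ and conjugating by the invertible matrix $D_2$, we get $D_2(D_1 A D_2)D_2^{-1} = (D_2 D_1) A$, so $(D_2 D_1) A$ is similar to $D_1 A D_2$ and therefore stable. Setting $D \eqdef D_2 D_1$, which is again a nonsingular diagonal matrix, exhibits $A$ as left-diagonally stabilizable, closing the cycle (3) $\Rightarrow$ (1) $\Rightarrow$ (2) $\Rightarrow$ (3). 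I do not expect any real obstacle here; the only point requiring care is the bookkeeping that the combined factor $D_2 D_1$ stays diagonal and invertible, which is automatic, so that the conjugation by $D_2$ genuinely produces a \emph{single} diagonal multiplier rather than a two-sided one.
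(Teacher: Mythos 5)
Your proof is correct and follows essentially the same route as the paper's: the key step is the identical conjugation $D_2(D_1AD_2)D_2^{-1}=(D_2D_1)A$, which merges the two diagonal factors into one while preserving stability by similarity. The only difference is organizational (you prove $1\Leftrightarrow 2$ directly and close a cycle, while the paper shows $3\Rightarrow 1$ and $3\Rightarrow 2$ separately), which changes nothing of substance.
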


\begin{proof}
Let $A \in \R^{n \times n}$. First, observe that $1 \Rightarrow 3$ and $2 \Rightarrow 3$ are obvious. In view of this,  we  show that $3 \Rightarrow 2$ and $3 \Rightarrow 1$ and this  will prove the Lemma. To this end, assume that $D_1 A D_2$ is stable for invertible $D_1, D_2$. Then $D_2 D_1 A D_2 D_2^{-1}=D_2D_1 A$ is also stable and $D_2D_1$ is diagonal. Similarly, $AD_2D_1$ is stable.
\end{proof}
The above result justifies the terminology \emph{diagonally stabilizable}, without having to specify left or right. We have the following result, whose proof is obvious in view of~\eqref{eq:multdiag}:

\begin{Lemma}
A sparse matrix space is unstable if and only if it does not contain any diagonally stabilizable matrix.\label{lem:unst_diag}
\end{Lemma}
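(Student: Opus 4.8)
The plan is to replace the stated biconditional by its logically equivalent contrapositive form: I would show that $\Sigma_\alpha$ is \emph{stable} if and only if it contains a diagonally stabilizable matrix. Since ``unstable'' is exactly the negation of ``stable,'' and ``contains no diagonally stabilizable matrix'' is exactly the negation of ``contains a diagonally stabilizable matrix,'' this formulation is identical to the one in the Lemma, and it is a little more transparent to argue. The entire proof rests on the identity $D\Sigma_\alpha = \Sigma_\alpha$ recorded in~\eqref{eq:multdiag}, which expresses the fact that left-multiplication by a non-singular diagonal matrix rescales rows and therefore preserves the zero/non-zero pattern defining the space.

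For the forward direction, I would start from a stable matrix $A \in \Sigma_\alpha$, guaranteed by the assumption of stability. Choosing $D = I$, the product $DA = A$ is stable, so $A$ is its own witness to diagonal stabilizability; hence $\Sigma_\alpha$ contains a diagonally stabilizable matrix. For the converse, I would take a diagonally stabilizable matrix $A \in \Sigma_\alpha$, so that $DA$ is stable for some non-singular diagonal $D$, and then invoke~\eqref{eq:multdiag} to conclude $DA \in D\Sigma_\alpha = \Sigma_\alpha$. Thus $\Sigma_\alpha$ contains the stable matrix $DA$ and is stable.

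There is essentially no obstacle here, which is why the paper flags the proof as obvious: the content is entirely captured by~\eqref{eq:multdiag}. The only point I would take a moment to verify is that the flavor of diagonal stabilizability being used (left versus right) matches the identity being applied; the preceding lemma shows left-, right-, and left-right-diagonal stabilizability coincide, so one may freely use either $D\Sigma_\alpha = \Sigma_\alpha$ or $\Sigma_\alpha D = \Sigma_\alpha$ to land the stable product back inside the space. With that observation in place, both inclusions close immediately.
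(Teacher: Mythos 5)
Your proof is correct and is precisely the argument the paper has in mind: the paper omits the proof entirely, stating it is ``obvious in view of~\eqref{eq:multdiag}'', and your two directions (taking $D=I$ to show a stable matrix is its own witness, and using $DA \in D\Sigma_\alpha = \Sigma_\alpha$ to land the stabilized product back in the space) are exactly the intended filling-in of that remark. Your closing observation about matching the left/right flavor of diagonal stabilizability to the identity used is a sensible check, and the preceding equivalence lemma disposes of it just as you say.
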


We now characterize the set of diagonally stabilizable matrices: for a subset  $\mathcal I \subset \lbrace 1,2,\ldots, n \rbrace$, we denote by $A_\mathcal{I}$ the principal submatrix of $A$ whose rows and columns are indexed by $\mathcal I$. We denote by $|\mathcal I|$ the cardinality of $\mathcal I$. A \emph{leading principal minor} is a \emph{principal minor} $\det(A_{\mathcal I})$ where $\mathcal I$ of the form $\{1,2,\ldots,k\}$, $k \in \{1,\ldots n\}$. We have the following result, taken from Yu, Anderson, Dasgupta and Fidan:
\begin{Lemma}\cite{yu_control_2009} If the leading principal minors of $A\in \R^{n \times n}$ are non-zero, then $A$ is diagonally stabilizable.
\label{th:diagstabander}
\end{Lemma}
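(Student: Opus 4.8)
The plan is to establish the equivalent statement that there exists a diagonal matrix $D$ with $DA$ Hurwitz; by the equivalence of left-, right- and two-sided diagonal stabilizability proved just above, this suffices for diagonal stabilizability. I would argue by induction on $n$. The base case $n=1$ is immediate: the hypothesis reads $a_{11}\neq 0$, and $d_1=-\operatorname{sign}(a_{11})$ gives $d_1 a_{11}=-|a_{11}|<0$, so the $1\times 1$ matrix $(d_1a_{11})$ is Hurwitz.

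For the inductive step I would single out the scalar pivot $a_{11}$ (the first leading principal minor, hence nonzero) and write
\[
A = \begin{bmatrix} a_{11} & b^\top \\ c & A_{22} \end{bmatrix}, \qquad b,\,c \in \R^{n-1},\quad A_{22}\in \R^{(n-1)\times(n-1)}.
\]
Since $a_{11}\neq 0$ I can form the Schur complement $S = A_{22} - c\,a_{11}^{-1}b^\top \in \R^{(n-1)\times(n-1)}$. The key algebraic fact is the minor identity $\det A_{\{1,\ldots,k\}} = a_{11}\,\det S_{\{1,\ldots,k-1\}}$ for $k=2,\ldots,n$, obtained by applying the Schur-complement determinant formula to each leading block (the rank-one correction $c\,a_{11}^{-1}b^\top$ restricts to the corresponding leading block of $S$). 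Combined with $a_{11}\neq 0$, the hypothesis that all leading principal minors of $A$ are nonzero forces all leading principal minors of $S$ to be nonzero. By the inductive hypothesis there is a diagonal $D_{22}\in\R^{(n-1)\times(n-1)}$ with $D_{22}S$ Hurwitz.

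It then remains to choose $d_1$ so that $D=\operatorname{diag}(d_1,D_{22})$ makes $DA$ Hurwitz, and here I would use time-scale separation: set $d_1 = -\operatorname{sign}(a_{11})/\mu$ for a small parameter $\mu>0$, so that
\[
DA = \begin{bmatrix} -|a_{11}|/\mu & -\operatorname{sign}(a_{11})\,b^\top/\mu \\ D_{22}\,c & D_{22}A_{22}\end{bmatrix}.
\]
Factoring the characteristic polynomial through the invertible top-left scalar block gives
\[
\det(\lambda I - DA) = \Big(\lambda + \tfrac{|a_{11}|}{\mu}\Big)\,\det\!\Big(\lambda I - D_{22}A_{22} + D_{22}\,c\,\big(\lambda+\tfrac{|a_{11}|}{\mu}\big)^{-1}\tfrac{\operatorname{sign}(a_{11})}{\mu}\,b^\top\Big).
\]
As $\mu\to 0^+$ the scalar coefficient $\big(\lambda+|a_{11}|/\mu\big)^{-1}\operatorname{sign}(a_{11})/\mu$ converges to $a_{11}^{-1}$ uniformly for $\lambda$ in compact sets, so the second factor converges to $\det(\lambda I - D_{22}S)$. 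Consequently $n-1$ roots approach $\operatorname{spec}(D_{22}S)$, which lies in the open left half-plane, while the remaining root tends to $-|a_{11}|/\mu \to -\infty$. By continuity of the roots of a polynomial in its coefficients, for all sufficiently small $\mu$ every eigenvalue of $DA$ lies in the open left half-plane, i.e. $DA$ is Hurwitz.

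The main obstacle is making this eigenvalue splitting fully rigorous. One must control the $n-1$ \emph{slow} eigenvalues, which I would handle via the uniform convergence of the reduced characteristic factor together with a Rouché/root-continuity argument on a fixed contour enclosing $\operatorname{spec}(D_{22}S)$, and separately the single \emph{fast} eigenvalue, which escapes to $-\infty$ and must be shown to remain in the left half-plane for all small $\mu$. This is precisely the linear, autonomous instance of Tikhonov's singular-perturbation theorem, so the cleanest route may be to invoke that result rather than reprove the splitting from scratch.
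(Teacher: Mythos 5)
The paper offers no proof of this lemma to compare against: it is imported wholesale from Yu, Anderson, Dasgupta and Fidan \cite{yu_control_2009}, and is essentially the classical Fisher--Fuller diagonal-stabilization theorem. So your attempt must stand on its own, and it does. The two pillars are both correct: the minor identity $\det A_{\{1,\ldots,k\}}=a_{11}\det S_{\{1,\ldots,k-1\}}$ holds because the rank-one correction $c\,a_{11}^{-1}b^\top$ restricted to a leading block is exactly the correction built from the truncated vectors, so the hypothesis passes to the Schur complement $S$; and the block factorization of $\det(\lambda I-DA)$ with the uniform convergence of $\operatorname{sign}(a_{11})/(\mu\lambda+|a_{11}|)$ to $a_{11}^{-1}$ on compact sets gives the eigenvalue splitting you describe. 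Moreover, the loose end you flag (rigor of the splitting, and the fast eigenvalue) can be closed much more cheaply than by invoking Tikhonov: consider $q_\mu(\lambda)=\mu\det(\lambda I-DA)$, which is obtained by scaling the first row of $\lambda I - DA$ by $\mu$ and whose coefficients converge to those of $q_0(\lambda)=|a_{11}|\det(\lambda I-D_{22}S)$, a polynomial of degree $n-1$ with nonzero leading coefficient. By root continuity under this degree drop, exactly $n-1$ roots of $\det(\lambda I - DA)$ converge to $\operatorname{spec}(D_{22}S)$ and exactly one root diverges; since the trace of $DA$ equals $-|a_{11}|/\mu+\operatorname{trace}(D_{22}A_{22})$ and the $n-1$ convergent roots have bounded sum, the divergent root equals $-|a_{11}|/\mu+O(1)$, and it must be real (a nonreal divergent root would force its conjugate to diverge as well, contradicting that only one root escapes), hence negative for small $\mu$. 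It is worth noting that your induction is the mirror image of the classical Fisher--Fuller argument: the traditional proof stabilizes the leading $(n-1)\times(n-1)$ block by induction and then appends a \emph{small} last diagonal entry $d_n$, whose sign is dictated by the scalar Schur complement $\det A/\det A_{\{1,\ldots,n-1\}}$, whereas you peel off a \emph{fast} first variable and pass the inductive hypothesis to the Schur complement; both schemes need a perturbation argument of comparable weight, so neither is strictly simpler, but yours has the mild advantage that the inductive hypothesis is applied to a matrix ($S$) whose minors are controlled by a clean identity rather than by restriction.
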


The above condition is not necessary; a simple example of this fact is given by  the stable matrix 
\begin{equation}
\label{eq:counexdiagstabander}
A=\left[ \begin{matrix} 0 & -1\\2 & -1 \end{matrix} \right].
\end{equation}

For our purposes, we establish the following straightforward extension:

\begin{Corollary}
Let $A \in \R^{n\times n}$. If there exists $\sigma \in S_n$ such that the leading principal minors of $P_\sigma A P_{\sigma}^{-1}$ are non-zero, then $A$ is stable.
\label{cor:diagstab}
\end{Corollary}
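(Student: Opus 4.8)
The plan is to reduce the statement to Lemma~\ref{th:diagstabander} by exploiting two facts: the spectrum is invariant under similarity, and conjugation by a permutation matrix preserves diagonality. (I read the conclusion as ``$A$ is diagonally stabilizable,'' since that is exactly what the cited tool delivers and what the surrounding development, via Lemma~\ref{lem:unst_diag}, requires; the literal word ``stable'' cannot be right, as e.g. the identity already furnishes a counterexample.)

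First I would set $B \eqdef P_\sigma A P_\sigma^{-1}$. By hypothesis the leading principal minors of $B$ are non-zero, so Lemma~\ref{th:diagstabander} applies to $B$ and yields a non-singular diagonal matrix $D$ for which $DB$ is stable (Hurwitz).

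Next I would unwind the conjugation. Writing $DB = D P_\sigma A P_\sigma^{-1}$ and conjugating by $P_\sigma$, the matrix
$$ P_\sigma^{-1}(DB)P_\sigma = \big(P_\sigma^{-1} D P_\sigma\big)\,A $$
is similar to $DB$, hence has the same eigenvalues and is in particular stable. The one observation that makes this work is that $\tilde D \eqdef P_\sigma^{-1} D P_\sigma$ is again a non-singular diagonal matrix, because conjugating a diagonal matrix by a permutation matrix merely permutes its diagonal entries. Thus $\tilde D A$ is stable, i.e.\ $A$ is left-diagonally stabilizable, and the equivalence of left-, right-, and left-right-diagonal stabilizability (the three-way Lemma above) upgrades this to $A$ being diagonally stabilizable.

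There is essentially no hard step here: the whole argument is a one-line manipulation once one notices that $P_\sigma^{-1} D P_\sigma$ stays diagonal. The only point requiring care is the bookkeeping of sides — Lemma~\ref{th:diagstabander} gives left-multiplication stability of $B$, which after conjugation becomes left-multiplication stability of $A$; invoking the equivalence Lemma both removes any ambiguity and shows that starting from a right-multiplication version of Lemma~\ref{th:diagstabander} would give the same conclusion.
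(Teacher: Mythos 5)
Your proof is correct and takes essentially the same route as the paper's own argument: apply Lemma~\ref{th:diagstabander} to $B = P_\sigma A P_\sigma^{-1}$ to get a diagonal $D$ with $DB$ stable, conjugate back by $P_\sigma$, and observe that $P_\sigma^{-1} D P_\sigma$ is again diagonal, so $A$ is diagonally stabilizable. Your reading of the conclusion as ``diagonally stabilizable'' rather than the literal ``stable'' is also exactly what the paper's proof establishes (it exhibits $D$ such that $DA$ is stable), so your correction of the statement's wording matches the author's evident intent.
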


\begin{proof}
Assume that there exists a permutation $\sigma \in S_n$ such that the leading principal minors of $P_\sigma A P_{\sigma}^{-1}$ are non-zero. We  exhibit a matrix $D$ such that $DA$ is stable. By Lemma~\ref{th:diagstabander}, there exists $D_1$ such that $D_1P_\sigma A P_{\sigma}^{-1}$ is stable. Observe that $D=P_\sigma^{-1} D_1 P_{\sigma}$ is a diagonal matrix and  $P_\sigma^{-1}(D_1P_\sigma A P_{\sigma}^{-1})P_\sigma=DA$ is stable (because similar to $D_1P_\sigma A P_{\sigma}^{-1}$).
\end{proof}
While the above result yields a stronger condition than Lemma~\ref{th:diagstabander}, and in particular recognizes the matrix in~\eqref{eq:counexdiagstabander} to be diagonally stabilizable, it is not a necessary and sufficient condition. 

\subsection{Unstable sparse matrix spaces}

Because they appear frequently in the remainder of the paper, we introduce a short-hand notation for leading principal minors: we denote by $\det_k(A)$ the $k$th leading principal minor of $A$. Hence $\det_1(A)=a_{11}, \det_2(A)=a_{11}a_{22}-a_{12}a_{21}$, etc. We also use the shorthand notation $\det_i(\sigma(A))=\det_i(P_\sigma A P_\sigma^{-1})$.

Let $\sigma \in S_n$, we introduce the polynomial $p_\sigma(A)$:
\begin{equation}\label{eq:defps}
p_\sigma(A)={\sideset{}{_1}\det} (\sigma(A)) {\sideset{}{_2}\det}(\sigma(A))\ldots {\sideset{}{_{n-1}} \det}(\sigma(A)).
\end{equation}
That is, $p_\sigma(A)$ is the product of the leading principal minors of $P_\sigma A P_{\sigma}^{-1}$.

We introduce the algebraic set \begin{equation}\label{eq:defV}{\cal V} = \left\lbrace A \in \R^{n \times n} \mbox{ s.t. } p_\sigma(A)=0 \mbox{ for all } \sigma \in S_n \right\rbrace.\end{equation}
As a consequence of Lemma~\ref{lem:unst_diag}, we have the following result:

\begin{Theorem}
If  $\Sigma_\alpha$ is an unstable sparse matrix space, then $\Sigma_\alpha \subset {\cal V}$.\label{th:unstabinU}
\end{Theorem}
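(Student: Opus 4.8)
The plan is to argue by contraposition (equivalently, by contradiction): I fix an arbitrary $A \in \Sigma_\alpha$, assume $A \notin \mathcal V$, and show that this is incompatible with $\Sigma_\alpha$ being unstable. Concretely, I will extract from $A$ a diagonally stabilizable matrix lying in $\Sigma_\alpha$ and then invoke Lemma~\ref{lem:unst_diag}, which states that an unstable space contains no diagonally stabilizable matrix. Thus a single witness $A$ violating the $\mathcal V$-condition would already force stability, so no such witness can exist and every $A \in \Sigma_\alpha$ must lie in $\mathcal V$.

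To carry this out I would first unwind the definition of $\mathcal V$ in~\eqref{eq:defV}: $A \notin \mathcal V$ means $p_\sigma(A) \neq 0$ for some $\sigma \in S_n$. By~\eqref{eq:defps}, $p_\sigma(A)$ is a product of leading principal minors of $P_\sigma A P_\sigma^{-1}$, so a nonzero product forces each of those minors to be nonzero. This is precisely the hypothesis of Corollary~\ref{cor:diagstab}, whose proof exhibits a diagonal matrix $D$ with $DA$ stable; hence $A$ is diagonally stabilizable. Since $A \in \Sigma_\alpha$, the space contains a diagonally stabilizable matrix, and Lemma~\ref{lem:unst_diag} then yields that $\Sigma_\alpha$ is stable, contradicting the assumption. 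As $A$ was arbitrary, this establishes $\Sigma_\alpha \subset \mathcal V$.

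The step I expect to be the main obstacle is matching the minors: the product $p_\sigma$ in~\eqref{eq:defps} controls the leading principal minors only through order $n-1$, whereas Corollary~\ref{cor:diagstab} (via Lemma~\ref{th:diagstabander}) requires that \emph{all} of them, including the top one $\det_n(\sigma(A)) = \det(A)$, be nonzero. To close this gap I would use that $\Sigma_\alpha$ is a linear subspace on which each minor is a polynomial and that $\Sigma_\alpha$ is invariant under multiplication by nonsingular diagonal matrices by~\eqref{eq:multdiag}. If $\det$ is not identically zero on $\Sigma_\alpha$, then the loci on which the $n$ leading principal minors of $P_\sigma(\cdot)P_\sigma^{-1}$ do not vanish are each open and dense, so an arbitrarily small perturbation $A'$ of $A$ within $\Sigma_\alpha$ satisfies all $n$ nonvanishing conditions simultaneously, and the Corollary applies to $A'$ in place of $A$. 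The genuinely delicate case is the degenerate one in which $\det \equiv 0$ on all of $\Sigma_\alpha$, making every matrix of $\Sigma_\alpha$ singular and therefore unstable; this case has to be treated separately, and it is exactly where one must be careful that the order-$(n-1)$ truncation in~\eqref{eq:defps} still suffices to place $\Sigma_\alpha$ inside $\mathcal V$.
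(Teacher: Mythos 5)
Your core argument---contrapositive, then Corollary~\ref{cor:diagstab} to get a diagonally stabilizable matrix in $\Sigma_\alpha$, then Lemma~\ref{lem:unst_diag} to conclude stability---is precisely the paper's own derivation: the paper gives no separate proof, stating Theorem~\ref{th:unstabinU} as an immediate consequence of Lemma~\ref{lem:unst_diag} (with Corollary~\ref{cor:diagstab} and the invariance $D\Sigma_\alpha=\Sigma_\alpha$ of~\eqref{eq:multdiag} doing the work). You also read Corollary~\ref{cor:diagstab} correctly through its proof, i.e.\ as giving diagonal stabilizability rather than the literal ``$A$ is stable'' of its statement, which is exactly what Lemma~\ref{lem:unst_diag} requires.

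Where you and the paper diverge is the ``obstacle'' you flagged, and here your instinct is sharper than the text, but your proposed repair cannot be completed: under the literal definition~\eqref{eq:defps}, in which $p_\sigma$ stops at $\det_{n-1}$, the theorem is \emph{false}, and the degenerate case you set aside ($\det\equiv 0$ on $\Sigma_\alpha$) is exactly where it fails. Take $n=2$ and $\alpha=\{(1,1),(1,2)\}$, so $\Sigma_\alpha$ consists of the matrices $\left[\begin{matrix} a & b\\ 0 & 0\end{matrix}\right]$. Every such matrix has $0$ as an eigenvalue, so $\Sigma_\alpha$ is unstable; yet $p_e(A)=a_{11}=a$ is not identically zero on $\Sigma_\alpha$, so $\Sigma_\alpha\not\subset\mathcal V$. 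No separate treatment of this case can succeed, since it is a genuine counterexample to the truncated statement. The resolution is that~\eqref{eq:defps} should be read as the sentence following it says: $p_\sigma(A)$ is the product of \emph{all} leading principal minors of $P_\sigma A P_\sigma^{-1}$, including $\det_n(\sigma(A))=\det(A)$. (This reading is also forced elsewhere: the proof of Theorem~\ref{th:mainthsuff} needs $\det(A)\not\equiv 0$ on $\Sigma_\alpha$, i.e.\ $G$ itself Hamiltonian, to conclude stability from $\Sigma_\alpha\not\subset\mathcal V$.) With that corrected definition, $p_\sigma(A)\neq 0$ already gives all $n$ leading principal minors nonzero at once, your first two paragraphs are a complete proof identical to the paper's, and your density/perturbation argument and case split become unnecessary; under the literal definition, your non-degenerate case is handled correctly but the degenerate case is an unfillable gap.
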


We summarize in the following proposition the symmetries of ${\cal V}$:

\begin{Proposition}
Let $A \in {\cal V}$. Then
\begin{enumerate}
\item $A'\in {\cal V}$
\item $DA \in {\cal V}$ for all invertible  diagonal matrices $D$
\item $P_\tau A P_\tau^{-1} \in {\cal V}$ for any $\tau \in S_n$
\item if $A \in {\cal V}$ is invertible, $A^{-1} \in {\cal V}$
\end{enumerate}

\end{Proposition}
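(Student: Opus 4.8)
The plan is to show that each of the four operations transforms the family of polynomials $\{p_\sigma\}_{\sigma \in S_n}$ into itself, up to a relabelling of $\sigma$ and a nonzero scalar; since ${\cal V}$ is defined by the \emph{simultaneous} vanishing of all the $p_\sigma$, this is exactly what is needed. Two facts drive everything. First, $\sigma \mapsto P_\sigma$ sends composition to matrix products, so $P_\sigma P_\tau = P_\rho$ for some permutation $\rho$, and for fixed $\tau$ the assignment $\sigma \mapsto \rho$ is a bijection of $S_n$. Second, the $k$th leading principal submatrix of $P_\sigma A P_\sigma^{-1}$ is the principal submatrix $A_{\mathcal{I}}$ with $\mathcal{I} = \sigma^{-1}\{1,\dots,k\}$. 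Items (1)--(3) are short; item (4) is the substantial one.

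For item (3), I would compute $p_\sigma(P_\tau A P_\tau^{-1}) = \prod_{k=1}^{n-1}\det_k(P_\rho A P_\rho^{-1}) = p_\rho(A)$, where $P_\rho = P_\sigma P_\tau$; since $A \in {\cal V}$ this is $0$ for every $\sigma$. For item (1), using $P_\sigma^{-1} = P_\sigma^{\top}$ gives $P_\sigma A' P_\sigma^{-1} = (P_\sigma A P_\sigma^{-1})^{\top}$, and as the leading $k\times k$ block of a transpose is the transpose of the leading block, $\det_k(\sigma(A')) = \det_k(\sigma(A))$; hence $p_\sigma(A') = p_\sigma(A) = 0$. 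For item (2), write $P_\sigma D A P_\sigma^{-1} = \tilde D\,(P_\sigma A P_\sigma^{-1})$ with $\tilde D = P_\sigma D P_\sigma^{-1}$ again diagonal and invertible; since left multiplication by a diagonal matrix scales the leading $k\times k$ block row-wise, $\det_k(\sigma(DA)) = \big(\prod_{i\le k}\tilde d_i\big)\det_k(\sigma(A))$, so $p_\sigma(DA)$ is a nonzero multiple of $p_\sigma(A)$ and therefore vanishes.

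The heart of the argument is item (4). Here I would invoke Jacobi's identity for the minors of an inverse: for invertible $M$ and any index set $\mathcal{I}$ one has $\det\big((M^{-1})_{\mathcal{I}}\big) = \det(M_{\mathcal{I}^c})/\det M$, the sign being $+1$ in the principal case. Applying this to $M = P_\sigma A P_\sigma^{-1}$ with $\mathcal{I} = \{1,\dots,k\}$, and using $P_\sigma A^{-1} P_\sigma^{-1} = M^{-1}$, I obtain that $\det_k(\sigma(A^{-1}))$ is $(\det A)^{-1}$ times the trailing $(n-k)\times(n-k)$ principal minor of $M$. Conjugating by the order-reversing permutation $w$, $w(i) = n+1-i$, turns that trailing minor into the $(n-k)$th leading principal minor of $P_\rho A P_\rho^{-1}$, where $P_\rho = P_w P_\sigma$. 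Taking the product over $k=1,\dots,n-1$ and reindexing by $m = n-k$ then yields $p_\sigma(A^{-1}) = (\det A)^{-(n-1)}\,p_\rho(A)$, which is $0$ because $A \in {\cal V}$ forces $p_\rho(A) = 0$; as $\sigma$ was arbitrary, $A^{-1} \in {\cal V}$.

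The only real obstacle is the bookkeeping in item (4): one must match leading minors of $A^{-1}$ with trailing minors of $A$ and then realign the latter as leading minors of a conjugate. This is cleanest in the equivalent reformulation $A \in {\cal V} \iff A$ admits no chain $\mathcal{I}_1 \subsetneq \cdots \subsetneq \mathcal{I}_{n-1}$ of index sets with all $\det(A_{\mathcal{I}_k}) \neq 0$. Jacobi's identity shows that such a chain for $A^{-1}$ is, via complementation $\mathcal{I}_k \mapsto \mathcal{I}_k^c$, precisely such a chain for $A$ (read in reverse order), so $A$ and $A^{-1}$ lie in ${\cal V}$ together. I would lean on this reformulation to avoid tracking signs, since only the (non)vanishing of each minor is relevant.
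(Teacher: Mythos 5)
Your proposal is correct and follows essentially the same route as the paper: items (1)--(3) use the same minor identities (transpose, $\det((DA)_{\mathcal I})=\det(D_{\mathcal I})\det(A_{\mathcal I})$, and $p_\tau(P_\sigma A P_\sigma^{-1})=p_{\tau\sigma}(A)$), and item (4) rests on the same two ingredients, the Jacobi identity and conjugation by the order-reversing permutation. The only difference is packaging: the paper tracks a single vanishing factor of each $p_\sigma$ and transports it, whereas you derive the exact identity $p_\sigma(A^{-1})=(\det A)^{-(n-1)}\,p_{w\sigma}(A)$ (and the equivalent nested-chain reformulation), a slightly sharper statement of the same computation.
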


\begin{proof} If $A \in {\cal V}$, then for all $\sigma \in S_n$, $p_{\sigma}(A)=0$. For the first statement, we have $\det_i(A)=\det_i(A')$, which implies that  $p_\sigma(A)=p_\sigma(A')$ for any permutation $\sigma$, which proves the claim. For the  second statement,  observe that  $\det((DA)_{\mathcal I})=\det(D_{\mathcal I})\det(A_{\mathcal I})$ for all subset $\mathcal I$ of $\{1,2,\ldots,n\}$. Hence $p_\sigma(DA) = k p_{\sigma}(A)$ where $k$ is the product of the leading principal minors of $P_\sigma DP_{\sigma}^{-1}$ and we conclude that $p_\sigma(A)=0 \Rightarrow p_\sigma(DA)=0.$
For the third statement, observe that, from the definition of the polynomials $p_\tau(A)$, we have that   $p_\tau(P_\sigma AP_\sigma^{-1})=p_{\tau\sigma}(A)$ which is zero because $A$ is in ${\cal V}$.

We now focus on the last statement of the Proposition.  We show that for every $\sigma \in S_n$, there exists an integer $k_\sigma$ such that $\det_{k_\sigma}(\sigma(A^{-1}))=0$. This  implies that $p_\sigma(A^{-1})=0$ as required.

To this end, we recall the Jacobi identity: let $B$ be an invertible matrix. The Jacobi identity relates the principal minors of $B$ and the principal minors of $B^{-1}$. Precisely, it says that $${\det}(B_{\mathcal{I}}^{-1}) = \frac{1}{\det(B)}{\det}B_{\mathcal{I}^c}.$$ Now, introduce the shorthand notation $$ \sideset{}{_{ \bar j}}{\det}(A)=\det(A_{\{j+1,j+2,\ldots,n\}}),$$
and from the Jacobi identity we deduce that if $\det_j(A)=0$, then $\det_{\bar j}(A^{-1})=0$.  Now because $p_\sigma(A)=0$ for all $\sigma \in S_n$,  there exist  integers $k_\sigma$ such that $\det_{k_\sigma}(\sigma(A))=0$. Observe that $P_\sigma A^{-1}P_\sigma^{-1}=(P_\sigma  AP_\sigma^{-1})^{-1}$ and hence the Jacobi identity yields
\begin{equation}
\label{eq:intexprop1}
\sideset{}{_{\bar{k}_\sigma}}{\det}(\sigma(A^{-1}))=0
\end{equation}
Denote by $\eta$ the permutation that reverses the order of $n$ elements: $\eta=(n,n-1,\ldots,1)$.  Now given a permutation $\sigma$ written in one-line notation, we define $\tau$ as \begin{equation}\label{eq:deftauprop}\tau =\eta\sigma= \left(\sigma_{n}, \sigma_{n-1}, \ldots, \sigma_{2},\sigma_1\right). \end{equation}
The first $n-k$ elements of $\tau$ are the last $n-k$ elements of $\sigma$ in reverse order. Since the absolute value of a determinant is invariant under row and column permutations, we conclude that
 we conclude that \begin{equation}\label{eq:propconc}\sideset{}{_{\bar{k}_\sigma}}{\det}(\sigma(A^{-1}))=0 \Leftrightarrow \sideset{}{_{n-k_\sigma}}{\det}(\tau(A^{-1}))=0\end{equation}
Because~\eqref{eq:intexprop1} holds for every $\sigma \in S_n$, we conclude that~\eqref{eq:propconc} holds for every $\tau \in S_n$ and thus $A^{-1} \in {\cal V}$.

\end{proof}

From the above results, we conclude that the space $${\cal U} \triangleq {\cal S} \cap {\cal V}$$ contains all the unstable sparse matrix spaces.

\section{Stable and unstable graphs}\label{sec:hamildec}

We now prove Theorems~\ref{th:strongsink},~\ref{th:mainthnec} and~\ref{th:mainthsuff}.  Recall that a sink is a node with a self-loop.

\begin{proof}[Proof of Theorem~\ref{th:strongsink}]
Let $G=(V,E)$ be the graph associated to the sparse matrix space   and assume without loss of generality that it is connected. Let $v \in V$ and let $V_c$ be the strongly connected component to which $v$ belongs. If $V_c$ is empty, and in particular $v$ is not a sink, take $V_c=\{v\}$. We show that if the subgraph induced by $V_c$ has no sink, then $G$ is not stable.

To this end,  split $V - V_c$ into $V_{\text{in}}$, consisting of vertices $v_j$ such such that there exists a path from $v_j$ to at least one vertex $u_i \in V_c$,  and  $V_{\text{out}}$ consisting of vertices  $w_i$ such that there exist a path from at least one $u_i \in V_c$ to $w_i$.

\begin{figure}
\begin{center}
\begin{tikzpicture}[scale = .25, ->,>=stealth,shorten >=1pt,auto,node distance=1.5cm,
  thin,main node/.style={circle,fill=blue!20,draw}]

  \node[main node] (1) {1};
  \node[main node] (2) [below right of=1,yshift=.33cm] {2};
  \node[main node] (3) [below left of=2,xshift=.5cm] {3};
  \node[main node] (5) [above right of=2,yshift=-.33cm] {5};
    \node[main node] (4) [right  of=3,xshift=-.5cm] {4};
  \path[every node/.style={font=\sffamily\small}]
    (1)      edge [bend left=10]  (2)
 edge [bend right=10] (3)   

    (2) edge [bend right=10](3)
    (2) edge [bend left=10](5)
        
    (4) edge [bend right=10](2)
    
    (3) edge [bend right=10](4)
   (4) edge [bend right=10](5)
   (5) edge [loop above] (5)
        ;
        \draw[color=black!90,fill=black!10,opacity=.4]  (4.3,-1) .. controls +(left:1.5cm) and +(left:1.5cm) .. (0,-8) .. controls +(left:1.5cm) and +(left:1.5cm) .. (8.3,-8.01) .. controls +(right:1.5cm) and +(right:1.5cm) .. (4.3,-1) -- cycle;
\end{tikzpicture}\quad
 \begin{tikzpicture}
 \node at (-2,0) {$\Longleftrightarrow$};
        \matrix [matrix of math nodes,left delimiter=(,right delimiter=),row sep=2mm,column sep=2mm] (m)
        {
            0 & \ast & \ast & 0 & 0\\
           0 & 0 &\ast & 0 &\ast\\
           0 &0 &0 &\ast& 0\\
           0 & \ast & 0 & 0 & \ast\\
           0 &0 &0 &0 &\ast\\
        };  
        \draw[color=black!90,fill=black!10,opacity=.4] (m-2-2.north west) -- (m-2-4.north east) -- (m-4-4.south east) -- (m-4-2.south west) -- (m-2-2.north west);
        
    \end{tikzpicture}

\end{center}
\caption{ The strongly connected component $\{2,3,4\}$ has no sink and thus the graph is not stable according to Theorem~\ref{th:strongsink}. With the notation of the proof of Theorem~\ref{th:strongsink}, $V_{\text{in}} = \{1\}$ and $V_{\text{out}}=\{5\}$. The corresponding sparse matrix space, illustrated on the right, is upper block diagonal since there are no edges linking $V_{\text{out}}$ to $V_{\text{in}}$. The block corresponding to the component $\{2,3,4\}$ has a zero diagonal and thus cannot have strictly negative eigenvalues. }\label{fig:illTh1}
\end{figure}
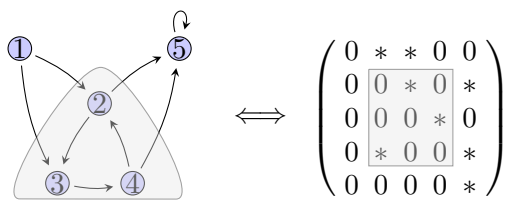
There are no edges from $V_{\text{out}}$ to $V_{\text{in}}$, since this would contradict the maximality of $V_c$. Order the vertices such that $V_{\text{in}}=\lbrace 1,2,\ldots,k\rbrace$, $V_c = \lbrace k+1,\ldots, l \rbrace$ and $V_{\text{out}} = \lbrace l+1,\ldots n\rbrace$. From Lemma~\ref{lem:stabsig}, we know that the stability properties of the graph with reordered vertices are equivalent to the stability properties of the original graph. Because there are no edges from $V_{\text{out}}$ to $V_{\text{in}}$,  the sparse matrix space corresponding to the graph with vertices reordered as above consists of upper block triangular matrices. (see Figure~\ref{fig:illTh1} for an illustration). The  eigenvalues of such matrices are the eigenvalues of the diagonal blocks. Since the diagonal block corresponding to $V_c$ has a zero diagonal--because no vertex is a sink -- it cannot be stable, and we conclude that no matrix in that space is stable.
\end{proof}

\subsection{Hamiltonian decompositions and stability}
The remaining  results rely on a correspondence between Hamiltonian decompositions of graphs and permutations of $S_n$ that we explain here.  Consider the permutation group $S_n$ acting on the set $\mathcal N=\lbrace 1,2,\ldots, n \rbrace$. A \emph{cycle} is a permutation that maps the elements of some subset $\mathcal N_1 \subset \mathcal N$ to each other in a cyclic fashion, while leaving the other elements fixed. For example, the permutation $(3,1,2,4)$ is a cycle since it leaves $4$ fixed, and maps the elements of $S=\lbrace 1,2,3\rbrace$ to each other in a cyclic fashion, but the permutation $(2,1,4,3)$ is not a cycle.

We adopt the widely used convention of denoting a cycle by $\bold{i}=(i_1i_2,\ldots i_k)$, where the $i_k$ are pairwise different,  to indicate that the element in position $i_1$ is replaced by the element in position $i_2$, the element in $i_2$ by the one in $i_3$ all the way to $i_n$ by $i_1$ while the other elements are fixed. We say that two cycles $\bold{i}$ and $\bold{j}$ are \emph{disjoint} if $i_l \neq j_m$ for all $l,m$. We call $k$ the \emph{order} of a cycle and we refer to cycles of order $k$ as $k$-cycles. It is a well-known fact that any permutation can be written as the composition of disjoint cycles. For example, the permutation that sends $\lbrace 1,2,3,4 \rbrace$ to $\lbrace 2,1,4,3 \rbrace$ is the composition of $(12)$ and $(34)$ and is written as $(12)(34)$.  It is easy to see that disjoint cycles  commute.

Now, observe that $k$-cycles in $S_n$ can be put in one-to-one correspondence with Hamiltonian cycles going  through $k$ vertices  in a graph on $n$ vertices: the cycle  $(i_1i_2i_3...)$ is mapped to the Hamiltonian cycle that start at vertex $i_1$, then goes to vertex $i_2$, etc. and vice-versa. In the same vein, to a Hamiltonian decomposition we can assign the composition of the necessarily disjoint cycles corresponding to each Hamiltonian cycle. Because the different Hamiltonian cycles in a Hamiltonian decomposition do not share vertices, the resulting cycles commute and the assignment of a Hamiltonian decomposition to a composition of cycles in $S_n$ is well-defined. Returning to the example of Figure~\ref{fig:illhamdec}, the Hamiltonian decomposition into $\{1,2\}$ and $\{3,4,5\}$ of the graph corresponds to the cycles $(12)$ and $(345)$ whose composition is the permutation $\sigma=(2,1,4,5,3)$.

\begin{proof}[proof of Theorem~\ref{th:mainthnec}]
We show that all the matrices in a sparse matrix space $\Sigma_\alpha$ that satisfies the conditions of the theorem have a characteristic polynomial with a zero coefficient, and hence are not stable (see~\cite{kaplan_operational_book_62}, Ch. 7 Th. 1).  To this end, write $p(A)=s^n+p_1s^{n-1}+p_2^{n-2} s^{n-2}+\ldots+p_n$ for the characteristic polynomial of $A$ and denote by $G$ the graph associated to $\Sigma_\alpha$. It is well-known that
\begin{multline}
p_1 = - \sum_{i=1}^n a_{ii}; 
p_2 = \sum_{|\mathcal I|=2} \det(A_\mathcal{I}); \cdots ;\\
 p_k =  (-1)^k \sum_{|\mathcal{I}|=k} \det(A_{\mathcal{I}}); \cdots;
p_n = (-1)^n \det(A)
\end{multline}

where the sums are taken over all subsets $\mathcal I \subset \mathcal N$ of given cardinality. 

In order for $p_k$ to be different from, there has to be at least one subset $\mathcal I$ of cardinality $k$ such that $\det(A_\mathcal{I})$ is non-zero. Let  $\mathcal{I}^*$ be one such subset. We can  express    $\det(A_{\mathcal{I}^*})$  as

$$\det A_{\mathcal{I}^*} = \sum_{\sigma \in S_k} \prod_{i\in \mathcal{I}^*} a_{i\sigma(i)}. $$ 
Hence,  there exists at least one permutation, say  $\sigma^* \in S_k$,  such that $\prod_{i\in \mathcal{I}^*} a_{i\sigma^*(i)}$ is a product of free variables. Because free variables correspond to edges in $G$, the subgraph of $G$ on the vertex set $\lbrace v_i, i \in \mathcal I^*\rbrace$ has a Hamiltonian decomposition--the decomposition corresponding to the permutation $\sigma^*$ per the above discussion. Thus if there are no subgraphs of size $k$ admitting a Hamiltonian decomposition, then $p_k=0$. This proves the theorem.
\end{proof}

Before moving on to the proof of Theorem~\ref{th:mainthsuff}, we  relate the existence of non-singular matrices in a sparse matrix space to properties of the associated graph:
\begin{Lemma}\label{lem:hamiltdeczerodet}
The sparse matrix space associated to a graph admitting a Hamiltonian decomposition contains  non-singular matrices.
\end{Lemma}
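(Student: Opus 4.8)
The plan is to exhibit an explicit non-singular matrix in the sparse matrix space by exploiting the permutation attached to the Hamiltonian decomposition. First I would invoke the correspondence set up just above: a Hamiltonian decomposition of $G$ into cycles covering all of $V$ corresponds to a permutation $\sigma \in S_n$, namely the composition of the disjoint cycles, with the property that for every $i \in \{1,\ldots,n\}$ the entry $a_{i\sigma(i)}$ corresponds to an edge of $G$. (A Hamiltonian cycle through a single vertex is a self-loop, i.e.\ a fixed point of $\sigma$ together with a free diagonal variable $a_{ii}$, so fixed points cause no trouble.) Thus each of the $n$ entries $a_{i\sigma(i)}$ is a free variable in $\Sigma_\alpha$.

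Next I would define $A \in \Sigma_\alpha$ by setting $a_{i\sigma(i)} = 1$ for all $i$ and every other entry to $0$. This is legitimate precisely because the prescribed nonzero entries all lie in the support $\alpha$. The resulting $A$ has exactly one $1$ in each row and each column, in the positions $(i,\sigma(i))$, so it is the permutation matrix associated to $\sigma$. Its determinant is therefore $(-1)^\sigma = \pm 1 \neq 0$, and $A$ is non-singular, which proves the claim.

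Equivalently, and this is the step worth spelling out, one can argue directly from the Leibniz expansion $\det(A) = \sum_{\pi \in S_n} (-1)^\pi \prod_{i} a_{i\pi(i)}$ viewed as a polynomial in the free variables: the monomial $\prod_i a_{i\sigma(i)}$ is a genuine product of free variables, and since the assignment $\pi \mapsto \lbrace(i,\pi(i))\rbrace$ is injective, no other permutation produces the same monomial. Hence this monomial survives with coefficient $(-1)^\sigma \neq 0$, the determinant is not the zero polynomial, and, $\R$ being infinite, some assignment of the free variables yields $\det(A) \neq 0$.

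I do not anticipate a serious obstacle here; the only point requiring care is the bookkeeping that guarantees every factor $a_{i\sigma(i)}$ is indeed a free variable, which is exactly the content of the Hamiltonian-decomposition/permutation correspondence established before the lemma. One must confirm that fixed points of $\sigma$ correspond to self-loops rather than being silently dropped, after which the remainder reduces to the standard fact that a permutation matrix has determinant $\pm 1$.
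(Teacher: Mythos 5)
Your proof is correct, but it takes a genuinely different (and simpler) route than the paper's. Both arguments start from the same correspondence: a Hamiltonian decomposition of the graph yields a permutation $\sigma^*$ such that every entry $a_{i\sigma^*(i)}$ is a free variable (with single-vertex cycles corresponding to self-loops, i.e.\ free diagonal entries, as you note). From there you simply set $a_{i\sigma^*(i)}=1$ and all other free variables to $0$, obtaining a permutation matrix with determinant $(-1)^{\sigma^*}=\pm 1$; this is legitimate because $\Sigma_\alpha$ is a vector space, so free variables may take the value $0$. The paper instead keeps all free variables nonzero: it sets $a_{i\sigma^*(i)}=n!$ and every other free variable to $1$, then shows by a counting estimate that the dominant monomial survives, namely $\prod_i a_{i\sigma(i)} \le (n!)^{n-1}$ for each of the at most $n!-1$ permutations $\sigma \neq \sigma^*$, so that $\det(A) > (n!)^n - (n!-1)(n!)^{n-1} > 0$. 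Your construction avoids all estimates; the paper's buys a nonsingular matrix whose free entries are all nonzero, though it never exploits this. Your second, genericity-style variant --- the restriction of the Leibniz expansion to $\Sigma_\alpha$ is a nonzero polynomial because distinct permutations give distinct monomials (the map $\pi \mapsto \lbrace (i,\pi(i))\rbrace$ is injective, so no cancellation occurs), hence it cannot vanish identically over the infinite field $\R$ --- is also valid, and is in fact closer in spirit to how the lemma is used later in the proof of Theorem~\ref{th:mainthsuff}, where the paper argues via vanishing loci of codimension at least one. One cosmetic point: whether your matrix is $P_{\sigma^*}$ or its transpose depends on the paper's convention for $P_\sigma$, but either way it is a permutation matrix with determinant $\pm 1$, so the conclusion stands.
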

\begin{proof}
Let $A=(a_{ij})$ be a matrix in the sparse matrix space. We can express  its determinant as $$\det(A)=\sum_{\sigma\in S_n} (-1)^{\sigma}\prod_{i=1}^n a_{i\sigma(i)}.$$ Because the associated graph admits a Hamiltonian decomposition, we know  from the discussion at the beginning of Section~\ref{sec:hamildec} that there is at least one permutation $\sigma$ for which $\prod_{i=1}^n a_{i\sigma(i)}$ is the product of free variables. Pick one such permutation,  denote it $\sigma^*$ and assume without loss of generality that it is even. We assign values to the free variables that make the matrix non-singular. To this end, let $a_{i\sigma^*(i)}=n!, i=1\ldots n$ and let all the other free variables be equal to $1$. For two different permutations $\sigma$ and $\tau$, the products $\prod_{i=1}^n a_{i\sigma(i)}$ and $\prod_{i=1}^n a_{i\tau(i)}$ contain at least a pair of terms $a_{i\sigma(i)}$ and $a_{i\tau(i)}$ which are not the same, otherwise $\tau=\sigma$. This yields the inequalities $\prod_{i=1}^n a_{i\sigma(i)} \leq (n!)^{n-1}$ for $\sigma \neq \sigma^*$, which allow us to lower bound the determinant of $A$ as follows
\begin{equation*}
\det(A) = \prod_{i=1}^n a_{i\sigma^*(i)}  + \sum_{\sigma\neq \sigma^*} (-1)^{\sigma}\prod_{i=1}^n a_{i\sigma(i)} > (n!)^n - (n!-1) (n!)^{n-1} > 0
\end{equation*} where we recall that $S_n$ contains $n!$ elements. This concludes the proof.
\end{proof}
We now give the proof of Theorem~\ref{th:mainthsuff}:

\begin{proof}[proof of Theorem~\ref{th:mainthsuff}]
The approach we take to prove the result is the following: we show that a sparse matrix space with associated graph satisfying the hypothesis of the theorem contains matrices that are diagonally stabilizable, and hence stable,  by showing that it is \emph{not included} in  $\cal V$ of~\eqref{eq:defV}.

Let $i_1, i_2, \ldots, i_n$ be integers such that the vertex set of $G_1$ is $\lbrace v_{i_1}\rbrace$, the vertex set of $G_2$ is $\lbrace v_{i_1}, v_{i_2}\}$, etc. and set $\sigma=(i_1, i_2, \ldots, i_n)$. We claim that we can  find a matrix $A$ in the sparse matrix space such that $p_\sigma(A) \neq 0$, which will prove the Theorem.

Since $G_1$ has a Hamiltonian decomposition with one vertex, it means that $a_{i_1i_1}$ is a free variable. Thus $\det_1(\sigma(A))$ vanishes on a set of codimension $1$ in $\Sigma_\alpha$.  More generally, since $G_i$ admits a Hamiltonian decomposition, from Lemma~\ref{lem:hamiltdeczerodet} we know that there are matrices in $\Sigma_\alpha$ with non-zero determinant, that is $\det_i(\sigma(A))$ does not vanish identically on $\Sigma_\alpha$.  Since the determinant is a polynomial function, we conclude that $\det_i(\sigma(A))$ vanishes on a set of codimension at least one in $\Sigma_\alpha$. 

Putting this together, for $p_\sigma(A)$ to vanish, at least one of its factors needs to vanish, and we have just shown that this happens on the union of sets of codimension one in $\Sigma_\alpha$. Hence,   there are matrices in $\Sigma_\alpha$ for which $p_\sigma(A)\neq 0$ and $\Sigma_a \nsubseteq {\cal V}$
\end{proof}

\section{Maximal and minimal sparse matrix spaces}\label{sec:setv}
We now prove some results about the structure of the set $\cal S$ of sparse matrix spaces. In particular, we show that the minimal codimension of an unstable sparse matrix space with one free variable on the diagonal is $2n-2$. That is, if an unstable sparse matrix space contains one non-zero diagonal, then it contains  at least $n-1$ off-diagonal zeros. Further results concerning this phenomenon will appear in a forthcoming publication.  As already mentioned earlier in this paper, the following relations  hold:
$$\Sigma_\alpha \mbox{ is stable } \Longrightarrow \mbox{ all } \Sigma_\beta \mbox{ such that } \Sigma_\alpha \subset \Sigma_\beta \mbox{ are stable}$$ 
and reciprocally
$$\Sigma_\alpha \mbox{ is unstable } \Longrightarrow \mbox{ all } \Sigma_\beta \mbox{ such that } \Sigma_\beta \subset \Sigma_\alpha \mbox{ are unstable}.$$
These inclusions are also easily visualized in terms of the graphs associated to sparse matrix spaces: $\Sigma_\alpha \subset \Sigma_\beta$ means that $\Sigma_\beta$ contains all the edges of $\Sigma_\alpha$ and some additional edges. Recall that the dimension of a sparse matrix space is the dimension of the vector space $\Sigma_\alpha$ and its codimension is $n^2-\dim \Sigma_\alpha$.  In view of the above facts, we introduce the following definition:
\begin{Definition} \begin{enumerate}
\item We call a sparse matrix space $\Sigma_\alpha$ \emph{minimally stable} if it is stable and there are no  $\Sigma_\beta \subset \Sigma_\alpha$ that are stable.
\item We call a sparse matrix space $\Sigma_\alpha$ \emph{maximally unstable} if it is unstable and there are no  $\Sigma_\beta \supset \Sigma_\alpha$ that are unstable.
\end{enumerate}
\end{Definition}
Equivalently, a minimally stable graph is a stable graph which can spare no edge while remaining stable, and a maximally unstable graph is an unstable graph which becomes stable if any edge is added to its edge set.
We have the following results:

\begin{Theorem} \label{th:structV}
\begin{enumerate}
\item The least dimension of a minimally stable sparse matrix space is $n$.
\item The least codimension of a maximally unstable sparse matrix space is $n$.
\item There are no maximally unstable sparse matrix space with one non-zero diagonal and codimension between $n$ and $2n-2$. .
\end{enumerate}

\end{Theorem}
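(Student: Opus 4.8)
All three parts run on a single combinatorial engine, which I would isolate first. Viewing the free positions of $\Sigma_\alpha$ inside a principal submatrix $A_{\mathcal I}$ as a bipartite graph between the row- and column-copies of $\mathcal I$, the determinant expansion used in Lemma~\ref{lem:hamiltdeczerodet} shows that the induced subgraph on $\mathcal I$ admits a Hamiltonian decomposition (equivalently $\det A_{\mathcal I}\not\equiv 0$) precisely when this bipartite graph has a perfect matching. A Hall-type count then yields the working lemma: if $|\mathcal I|=k$ and at most $k-1$ entries of $A_{\mathcal I}$ are forced to zero, a perfect matching exists, since a failure of Hall's condition produces an all-zero rectangle $S\times T$ with $|S|+|T|\ge k+1$, hence $|S|\,|T|\ge k$ zeros, a contradiction. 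I would state this once and reuse it.

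Part (1) then splits into a bound and a construction. The bound is immediate from Theorem~\ref{th:mainthnec}: a stable space contains a Hamiltonian $n$-subgraph, i.e. a permutation $\sigma$ with every $(i,\sigma(i))$ free, which already uses $n$ distinct free variables, so $\dim\Sigma_\alpha\ge n$. For the construction I would take the space of diagonal matrices (all self-loops): it is stable because a diagonal matrix with negative entries is Hurwitz, and by Theorem~\ref{th:strongsink} deleting any self-loop isolates a sink-free strongly connected component, so it is minimally stable of dimension exactly $n$. For part (2), where codimension is the number of zeros, the lower bound is the working lemma applied along a chain: if $\Sigma_\alpha$ has at most $n-1$ zeros I would build $\mathcal I_1\subset\cdots\subset\mathcal I_n$ with $|\mathcal I_k|=k$ top-down, at each step deleting a vertex incident to some zero so that the number of zeros inside $\mathcal I_k$ never exceeds $k-1$; the lemma makes every $A_{\mathcal I_k}$ Hamiltonian and Theorem~\ref{th:mainthsuff} yields stability. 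Thus every unstable, hence every maximally unstable, space has codimension $\ge n$, and the zero-diagonal space attains this: its trace vanishes identically so it is unstable, while completing any one diagonal entry leaves $n-1$ zeros and therefore, by the chain argument, a stable space, so it is maximally unstable of codimension $n$.

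Part (3) is where I expect the real difficulty. Normalising by Lemma~\ref{lem:stabsig} so the unique free diagonal entry is $(1,1)$, a one-nonzero-diagonal space may carry up to $2n-3$ zeros, far outside the range where the crude count applies; the leverage is that the $n-1$ diagonal zeros sit one per row and column and that vertex $1$ lies in every $\mathcal I_k$. I would prove the sharpened lemma that $A_{\mathcal I}$ with $1\in\mathcal I$, $|\mathcal I|=k$, and at most $k-2$ \emph{off-diagonal} zeros still has a perfect matching: an all-zero rectangle $S\times T$ cannot contain the free entry $(1,1)$, so all of its $|S|\,|T|-|S\cap T|\ge k-1$ off-diagonal positions are off-diagonal zeros, a contradiction. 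Rerunning the top-down deletion, never removing vertex $1$ and always removing a vertex $\ne 1$ incident to an off-diagonal zero, preserves the invariant ``$\le k-2$ off-diagonal zeros inside $\mathcal I_k$'' down to $\mathcal I_1=\{1\}$, producing nested Hamiltonian subgraphs.

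By Theorem~\ref{th:mainthsuff} a one-nonzero-diagonal space with at most $n-2$ off-diagonal zeros is then stable, so an unstable one has at least $n-1$ off-diagonal zeros and codimension at least $(n-1)+(n-1)=2n-2$; this excludes maximally unstable examples of codimension below $2n-2$, as claimed. The step needing the most care is verifying that a deletable vertex is always available: every off-diagonal zero $(a,b)$ has $a\ne b$, so at least one index is $\ne 1$ and may be removed, which is exactly what keeps the invariant alive and separates this regime (codimension $\ge 2n-2$) from the single exceptional space of part (2) at codimension $n$.
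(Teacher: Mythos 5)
Your proof is correct, and while part (1) coincides with the paper's (the diagonal space, plus the observation that dimension less than $n$ forces instability, which the paper phrases via rank deficiency rather than via Theorem~\ref{th:mainthnec}), the engine you use for parts (2) and (3) is genuinely different. The paper works \emph{bottom-up}: starting from a sink $v_1$, it extends a Hamiltonian cycle one vertex at a time, observing that the candidate extended cycles (insert a new vertex $j$ into each position of the current cycle, over all choices of $j$) use pairwise distinct new edges, so that blocking every extension at step $k$ costs at least $k(n-k)\geq n-1$ zeros; with at most $n-2$ (off-diagonal) zeros available some extension survives, and the resulting nested chain feeds Theorem~\ref{th:mainthsuff}. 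You instead work \emph{top-down}, deleting one vertex at a time while maintaining the invariant that $\mathcal{I}_k$ carries at most $k-1$ zeros (at most $k-2$ off-diagonal zeros in part (3)), and you certify that each induced subgraph is Hamiltonian by a Frobenius--K\"{o}nig/Hall counting lemma: if the $k\times k$ block admits no permutation supported on free entries, it contains an all-zero rectangle $S\times T$ with $|S|+|T|\geq k+1$, hence at least $|S|\,|T|\geq k$ zeros, respectively at least $|S|\,|T|-|S\cap T|\geq k-1$ off-diagonal zeros. Both routes produce the same object---a nested chain of Hamiltonian subgraphs handed to Theorem~\ref{th:mainthsuff}---but yours buys a single reusable lemma that unifies parts (2) and (3), eliminates the paper's case split (sink versus no sink among the remaining vertices) and its edge-disjointness bookkeeping, and makes the inductive maintenance trivial (``delete a vertex incident to a zero, never vertex $1$''); the paper's route buys constructiveness, exhibiting explicit Hamiltonian cycles rather than abstract matchings, and the sharper quantitative fact that at least $k(n-k)$ edge removals are needed to block the $k$-th extension. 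One small remark: your appeal to the freeness of $(1,1)$ inside the sharpened lemma is superfluous, since the count $|S|\,|T|-|S\cap T|\geq k-1$ holds for any Hall-violating rectangle regardless of where $(1,1)$ sits; what is essential---and you do state it---is only that vertex $1$ is never deleted, so that $\mathcal{I}_1=\{1\}$ is itself Hamiltonian.
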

\begin{proof}
{\bf 1.} Observe that diagonal matrices form a stable SMS of dimension $n$ and since every SMS of dimension strictly less than $n$ contains only rank deficient matrices, it is not stable. We denote by $\Sigma_\delta$ the SMS of diagonal matrices.

{\bf 2.} It is clear that the SMS $\Sigma_0 $ with zero variables on the diagonal   is unstable. We  show here that any SMS with less than $n$ zero variables is stable, which proves the maximality of $\Sigma_0$. To this end, let $G=(V,E)$ be the graph associated to an arbitrary SMS $\Sigma$ with $n-1$ zero variables. We show by induction that the conditions of Theorem~\ref{th:mainthsuff} are satisfied and thus $\Sigma$ is stable. We start by observing that  $G$ has at least one sink, say $v_1$. We claim that there exists at least one vertex, say $v_2$, such the subgraph  induced by $\{v_1,v_2\}$ (call it $G_2$) admits a Hamiltonian decomposition. We consider two cases: first,  if at least one  vertex besides $v_1$ is a sink, then take $v_2$ to be that vertex and we are done. Second, assume that no vertex other than $v_1$ is a sink. Then all the $n-1$ zero variables of $\Sigma$ are on the diagonal and, in particular, all the edges $(v_i,v_j)$ for $i\neq j$ are in $E$. Thus $G_2$ admits a Hamiltonian cycle. This proves the base case. 

For the inductive step,   assume that the subgraph induced by $\{v_1,\ldots, v_k\}$ (call it $G_k$) admits a Hamiltonian decomposition. We will show that there exists a vertex $v$ such that the subgraph  induced by $\{v_1, \ldots, v_k,v\}$ admits a Hamiltonian decomposition. As above, we consider two cases: first, if any vertex in $\{v_{k+1}, \ldots, v_n\}$ is a sink, then we can take $v$ to be that vertex and we are done. Second, assume that none of these vertices are sinks, then $\Sigma$ has  at least $n-k$ zero variables on the diagonal.
 Let $w=(w_1w_2\ldots w_l)$ be a Hamiltonian cycle in the Hamiltonian decomposition of $G_k$.  If there is a pair of vertices $w_i$, $v$ for which both edges $(w_i,v)$ and $(v,w_{i+1})$ are in $G$, then we can extend the cycle $w$ to a cycle of length $l+1$ (see Figure~\ref{fig:exproofthegap}). Because   every vertex in $G_k$ belongs to a cycle, there are $k(n-k)$ candidate pairs. By a  simple counting argument, using the fact that no edge appears more than once in $\{(w_i,v), (v,w_{i+1})\}$, there needs to be at least $k(n-k)+n-k$ zero variables to prevent the existence of $v$ such that $\{v_1,\ldots,v_k,v\}$ has a Hamiltonian decomposition. Because $k(n-k)>n-1$ for any $k \in \{1,2,\ldots,n-1\} $,   we see that this value is larger than $n$.

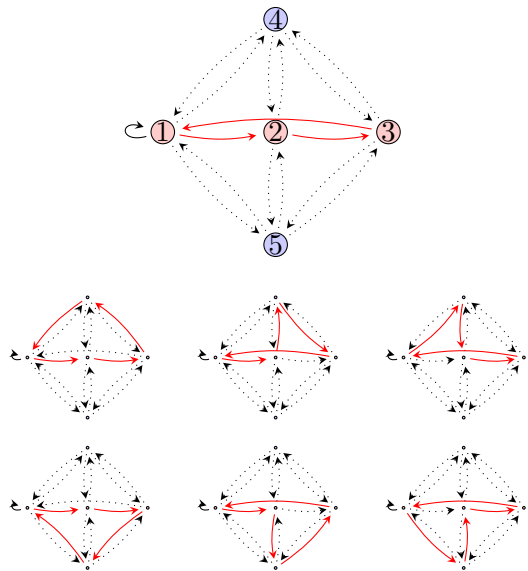
\begin{figure}
\begin{center}

\begin{tikzpicture}[scale = .1, ->,>=stealth,shorten >=1pt,auto,node distance=1.5cm,
  thin,main node/.style={circle,fill=blue!20,draw},main nr/.style={circle,fill=red!20,draw}]
\begin{scope}[xshift=-7cm,yshift = 0cm]
  \node[main nr] (1) {1};
  \node[main nr] (2) [right  of=1] {2};
  \node[main nr] (3) [ right of=2] {3};
  \node[main node] (4) [above of=2] {4};
  \node[main node] (5) [below of=2] {5};

  \path[every node/.style={font=\sffamily\small}]
    (1)      edge [bend right=10,red]  (2)
   edge   [bend right=10,  dotted] (4)
   edge   [bend right=10, dotted] (5)
 edge [loop left]  (1)

    (2) edge [bend right=10,red](3)
    edge [bend right=10, dotted](4)
    edge [bend right=10, dotted](5)
       
    (3)edge [bend right=10,red](1)
    edge [bend right=10, dotted](4)
    edge [bend right=10, dotted](5)

    (4)edge [bend right=10, dotted](3)
    edge [bend right=10, dotted](2)
    edge [bend right=10, dotted](1)
        (5)edge [bend right=10, dotted](3)
    edge [bend right=10, dotted](2)
    edge [bend right=10, dotted](1);
  \end{scope}
  \begin{scope}[ yshift=-30cm, xshift=-25cm,->,>=stealth,shorten >=1pt,auto,node distance=.8cm,
  thin,main node/.style={circle,fill=blue!20,draw},main nr/.style={circle,fill=red!20,draw}]
    \node[main nr] (1) {};
  \node[main nr] (2) [right  of=1] {};
  \node[main nr] (3) [ right of=2] {};
  \node[main node] (4) [above of=2] {};
  \node[main node] (5) [below of=2] {};

  \path[every node/.style={font=\sffamily\small}]
    (1)      edge [bend right=10,red]  (2)
   edge   [bend right=10, dotted] (4)
   edge   [bend right=10, dotted] (5)
 edge [loop left]  (1)

    (2) edge [bend right=10,red](3)
    edge [bend right=10, dotted](4)
    edge [bend right=10, dotted](5)
       
    (3)edge [bend right=10,dotted](1)
    edge [bend right=10, red](4)
    edge [bend right=10, dotted](5)

    (4)edge [bend right=10, dotted](3)
    edge [bend right=10, dotted](2)
    edge [bend right=10, red](1)
        (5)edge [bend right=10, dotted](3)
    edge [bend right=10, dotted](2)
    edge [bend right=10, dotted](1);
    \end{scope}
    
  \begin{scope}[ yshift=-30cm, xshift=0cm,->,>=stealth,shorten >=1pt,auto,node distance=.8cm,
  thin,main node/.style={circle,fill=blue!20,draw},main nr/.style={circle,fill=red!20,draw}]
    \node[main nr] (1) {};
  \node[main nr] (2) [right  of=1] {};
  \node[main nr] (3) [ right of=2] {};
  \node[main node] (4) [above of=2] {};
  \node[main node] (5) [below of=2] {};

  \path[every node/.style={font=\sffamily\small}]
    (1)      edge [bend right=10,red]  (2)
   edge   [bend right=10, dotted] (4)
   edge   [bend right=10, dotted] (5)
 edge [loop left]  (1)

    (2) edge [bend right=10,dotted](3)
    edge [bend right=10, red](4)
    edge [bend right=10, dotted](5)
       
    (3)edge [bend right=10,red](1)
    edge [bend right=10, dotted](4)
    edge [bend right=10, dotted](5)

    (4)edge [bend right=10, red](3)
    edge [bend right=10, dotted](2)
    edge [bend right=10, dotted](1)
        (5)edge [bend right=10, dotted](3)
    edge [bend right=10, dotted](2)
    edge [bend right=10, dotted](1);
    \end{scope}
        \begin{scope}[ yshift=-30cm, xshift=25cm,->,>=stealth,shorten >=1pt,auto,node distance=.8cm,
  thin,main node/.style={circle,fill=blue!20,draw},main nr/.style={circle,fill=red!20,draw}]
    \node[main nr] (1) {};
  \node[main nr] (2) [right  of=1] {};
  \node[main nr] (3) [ right of=2] {};
  \node[main node] (4) [above of=2] {};
  \node[main node] (5) [below of=2] {};

  \path[every node/.style={font=\sffamily\small}]
    (1)      edge [bend right=10,dotted]  (2)
   edge   [bend right=10, red] (4)
   edge   [bend right=10, dotted] (5)
 edge [loop left]  (1)

    (2) edge [bend right=10,red](3)
    edge [bend right=10, dotted](4)
    edge [bend right=10, dotted](5)
       
    (3)edge [bend right=10,red](1)
    edge [bend right=10, dotted](4)
    edge [bend right=10, dotted](5)

    (4)edge [bend right=10, dotted](3)
    edge [bend right=10, red](2)
    edge [bend right=10, dotted](1)
        (5)edge [bend right=10, dotted](3)
    edge [bend right=10, dotted](2)
    edge [bend right=10, dotted](1);
    \end{scope}
       
         
  \begin{scope}[ yshift=-50cm, xshift=-25cm,->,>=stealth,shorten >=1pt,auto,node distance=.8cm,
  thin,main node/.style={circle,fill=blue!20,draw},main nr/.style={circle,fill=red!20,draw}]
    \node[main nr] (1) {};
  \node[main nr] (2) [right  of=1] {};
  \node[main nr] (3) [ right of=2] {};
  \node[main node] (5) [above of=2] {};
  \node[main node] (4) [below of=2] {};

  \path[every node/.style={font=\sffamily\small}]
    (1)      edge [bend right=10,red]  (2)
   edge   [bend right=10, dotted] (4)
   edge   [bend right=10, dotted] (5)
 edge [loop left]  (1)

    (2) edge [bend right=10,red](3)
    edge [bend right=10, dotted](4)
    edge [bend right=10, dotted](5)
       
    (3)edge [bend right=10,dotted](1)
    edge [bend right=10, red](4)
    edge [bend right=10, dotted](5)

    (4)edge [bend right=10, dotted](3)
    edge [bend right=10, dotted](2)
    edge [bend right=10, red](1)
        (5)edge [bend right=10, dotted](3)
    edge [bend right=10, dotted](2)
    edge [bend right=10, dotted](1);
    \end{scope}
    
  \begin{scope}[ yshift=-50cm, xshift=0cm,->,>=stealth,shorten >=1pt,auto,node distance=.8cm,
  thin,main node/.style={circle,fill=blue!20,draw},main nr/.style={circle,fill=red!20,draw}]
    \node[main nr] (1) {};
  \node[main nr] (2) [right  of=1] {};
  \node[main nr] (3) [ right of=2] {};
  \node[main node] (5) [above of=2] {};
  \node[main node] (4) [below of=2] {};

  \path[every node/.style={font=\sffamily\small}]
    (1)      edge [bend right=10,red]  (2)
   edge   [bend right=10, dotted] (4)
   edge   [bend right=10, dotted] (5)
 edge [loop left]  (1)

    (2) edge [bend right=10,dotted](3)
    edge [bend right=10, red](4)
    edge [bend right=10, dotted](5)
       
    (3)edge [bend right=10,red](1)
    edge [bend right=10, dotted](4)
    edge [bend right=10, dotted](5)

    (4)edge [bend right=10, red](3)
    edge [bend right=10, dotted](2)
    edge [bend right=10, dotted](1)
        (5)edge [bend right=10, dotted](3)
    edge [bend right=10, dotted](2)
    edge [bend right=10, dotted](1);
    \end{scope}
        \begin{scope}[ yshift=-50cm, xshift=25cm,->,>=stealth,shorten >=1pt,auto,node distance=.8cm,
  thin,main node/.style={circle,fill=blue!20,draw},main nr/.style={circle,fill=red!20,draw}]
    \node[main nr] (1) {};
  \node[main nr] (2) [right  of=1] {};
  \node[main nr] (3) [ right of=2] {};
  \node[main node] (5) [above of=2] {};
  \node[main node] (4) [below of=2] {};

  \path[every node/.style={font=\sffamily\small}]
    (1)      edge [bend right=10,dotted]  (2)
   edge   [bend right=10, red] (4)
   edge   [bend right=10, dotted] (5)
 edge [loop left]  (1)

    (2) edge [bend right=10,red](3)
    edge [bend right=10, dotted](4)
    edge [bend right=10, dotted](5)
       
    (3)edge [bend right=10,red](1)
    edge [bend right=10, dotted](4)
    edge [bend right=10, dotted](5)

    (4)edge [bend right=10, dotted](3)
    edge [bend right=10, red](2)
    edge [bend right=10, dotted](1)
        (5)edge [bend right=10, dotted](3)
    edge [bend right=10, dotted](2)
    edge [bend right=10, dotted](1);
    \end{scope}
\end{tikzpicture}
\end{center}
\caption{{ The subgraph $G_3$ induced by the vertices $1,2,3$ has the Hamiltonian cycle $(123)$ and is fully connected to the remaining vertices $4$ and $5$. The subgraph $G_4^4$, induced by the vertex set $\{1,2,3,4\}$, has three Hamiltonian cycles as illustrated in row right below the larger graph; the cycles are $(1234), (1243), (1423)$. None of the newly added edges in  $G_4^4$ (newly added compared to the edge set of $G_3$) appear more than once in the cycles. Hence in order to break these three cycles, we need to remove 3 of the newly added edges at least. The bottom row  deals with the subgraph induced by the vertex set $\{1,2,3,5\}$, that is $G_4^5$. The conclusion is the same as for $G_4^4$ and we thus need to subtract at least 6 edges to prevent having  a Hamiltonian cycle of length 4 by adding vertex to $G_3$. }}
\label{fig:exproofthegap}
\end{figure}

{\bf 3.} We again work with both the matrix and graphical representation of a SMS:  consider the graph $G=(V,E)$ on $n$ vertices and with the following edge set $$E=\lbrace (v_1,v_1)\rbrace \cup \lbrace (v_i,v_j), i \neq j \rbrace,$$ that is $G$ is a complete graph with a self-loop on $v_1$, and no other self-loops. This graph is easily seen to correspond to a SMS with $n-1$ zero diagonals and all other entries free. We show that any graph $G'$ obtained from $G$ by subtracting at most $n-2$ edges satisfies the conditions of Theorem~\ref{th:mainthsuff} and is thus stable.

To this end, we show that, starting from $G_1=\{v_1\}$, we can construct a series of subgraphs $G_1 \subset G_2 \subset \ldots \subset G_k \subset \ldots \subset G$ all of which admit a Hamiltonian cycle, unless we subtract at least $n-1$ edges from $G$. The proof is a simple induction on $k$, the order of the subgraph.

First, observe that there are $n-1$ subgraphs of  order 2 in $G$ that contain $\{v_1\}$; they are the graphs induced by the vertex sets $\{v_1, v_j\}$ for $j \neq 1$. Observe that each of these graphs contain a Hamiltonian cycle of order $2$, namely $(1j)$, and that no edge appears more than once in these Hamiltonian cycles. Hence, if one subtracts at most $n-2$ edges from $G$ to obtain a graph $G'$, each edge subtraction breaking at most one cycle, there exists a subgraph of order $2$ in $G'$ which has a Hamiltonian decomposition. This completes the proof of the base case.

For the inductive step, consider a subgraph $G_k$ of order $k$ in $G$ and assume that $G_k$ has a Hamiltonian cycle. We show that unless we subtract at least $n-1$ edges from $G$ to obtain $G'$, there exists a subgraph $G_{k+1}$ of $G'$ of order $k+1$ which posses a Hamiltonian cycle. Assume without loss of generality that $(12\ldots k)$ is the Hamiltonian cycle in $G_k$ and let $j \in \lbrace k+1,\ldots n\}$. Then the subgraphs $G_{k+1}^j$  of $G$ induced by the vertex sets $\{1,2,\ldots,k,j\}$ have each $k$ Hamiltonian cycles: $(j12\ldots k), (1j2\ldots k), \ldots (12\ldots jk)$ as illustrated in Figure~\ref{fig:exproofthegap}. There are $n-k$ different subgraphs $G_{k+1}^j$ and, as above, no edge that is not in the original cycle $(12\ldots k)$ appears more than once. Hence, unless we subtract $k(n-k)$ edges from $G$ to obtain $G'$, there exists a graph of order $k+1$ in $G'$  which contains a Hamiltonian cycle. Because $k(n-k) \geq n-1$ for any $k \in \{1,2,\ldots, n-1\}$, this concludes the inductive step and the proof of point 3.

\end{proof}

{

}

\end{document}